
\documentclass[12pt]{amsart}
\pdfoutput=1
\usepackage{amsmath, amssymb}
\usepackage{amsfonts}
\usepackage{mathrsfs}
\usepackage[arrow,matrix,curve,cmtip,ps]{xy}
\usepackage{graphicx}
\usepackage{amsthm}
\usepackage{float}
\usepackage{amsthm}
\usepackage[utf8]{inputenc}
\usepackage[T1]{fontenc}
\usepackage{mathtools}

\allowdisplaybreaks

\newtheorem{theorem}{Theorem}[section]
\newtheorem{lemma}[theorem]{Lemma}

\newtheorem{corollary}[theorem]{Corollary}

\newtheorem*{theorem*}{Theorem}
\theoremstyle{remark}
\newtheorem{remark}[theorem]{Remark}

\numberwithin{equation}{section}



\begin{document}
\title[ Quadratic Bounds on Quasiconvexity]{Quadratic Bounds on the Quasiconvexity of Nested Train Track Sequences}

\author{Tarik Aougab}

\address{Department of Mathematics \\ Yale University \\ 10 Hillhouse Avenue, New Haven, CT 06510 \\ USA}
\email{tarik.aougab@yale.edu}

\date{\today}

\keywords{Disk Set, Curve Complex, Mapping Class Group}

\begin{abstract}

Let $S_{g,p}$ denote the genus $g$ orientable surface with $p$ punctures. We show that nested train track sequences constitute $O((g,p)^{2})$-quasiconvex subsets of the curve graph, effectivizing a theorem of Masur and Minsky. As a consequence, the genus $g$ disk set is $O(g^{2})$-quasiconvex. We also show that splitting and sliding sequences of birecurrent train tracks project to $O((g,p)^{2})$-unparameterized quasi-geodesics in the curve graph of any essential subsurface, an effective version of a theorem of Masur, Mosher, and Schleimer.

\end{abstract}

\maketitle

\section{Introduction}
Let $S_{g,p}$ denote the orientable surface of genus $g$ with $p\geq 0$ punctures, and let $\mathcal{C}(S_{g,p})$ be the corresponding curve complex. Finally, let $\mathcal{C}_{k}(S_{g,p})$ denote the corresponding $k$-skeleton. 

Let $(\tau_{i})_{i}$ be a sequence of train tracks on $S_{g,p}$ such that $\tau_{i+1}$ is carried by $\tau_{i}$ for each $i$. Such a collection of train tracks defines a subset of $\mathcal{C}_{0}(S_{g,p})$ called a \textit{nested train track sequence}. A \textit{train track splitting sequence} is an important special case of such a sequence, in which $\tau_{i}$ is obtained from $\tau_{i-1}$ via one of two simple combinatorial moves, \textit{splitting} and \textit{sliding}. 

A nested train track sequence is said to have $R$-bounded steps if the $\mathcal{C}_{1}$-distance between the vertex cycles of $\tau_{i}$ and those of $\tau_{i+1}$ is bounded above by $R$.  Masur-Minsky \cite{Mas-MurIII} showed that any nested train track sequence with $R$-bounded steps is a $K=K(R,g,p)$-quasigeodesic. Our first result provides some effective control on $K$ as a function of $g$ and $p$:

\begin{theorem} Let $\omega(g,p)= 3g+p-4$. There exists a function $K(g,p)=O(\omega(g,p)^{2})$ such that any nested train track sequence with $R$-bounded steps is a $(K(g,p)+R)$-unparameterized quasi-geodesic of the curve graph $\mathcal{C}_{1}(S_{g,p})$, which is $(K(g,p)+R)$-quasiconvex.
\end{theorem}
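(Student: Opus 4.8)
The plan is to deduce both assertions of the theorem from a single effective estimate, fed into the machinery of uniformly hyperbolic curve graphs. First, by uniform hyperbolicity of curve graphs --- $\mathcal{C}_1(S_{g,p})$ is $\delta_0$-hyperbolic with $\delta_0$ independent of $(g,p)$ --- it is enough to work with the piecewise-geodesic path $\gamma$ obtained by choosing a vertex cycle $v_i\in V(\tau_i)$ for each $i$ and joining consecutive ones by $\mathcal{C}_1$-geodesics; since $\mathrm{diam}_{\mathcal{C}_1}V(\tau)$ is universally bounded by some $c_0$, this path has step size $\le R+O(1)$. By a standard argument in $\delta_0$-hyperbolic geometry (a local-to-global / Morse-type criterion with $\delta_0$ fixed), it then suffices to show that the image of $\gamma$ is $D$-\emph{strongly contracting} with $D=O(\omega^2)$: every metric ball disjoint from a $D$-neighborhood of $\mathrm{im}(\gamma)$ has image of diameter $\le D$ under nearest-point projection to $\gamma$. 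Such $\gamma$ is then automatically an $(O(D),O(D))$-unparameterized quasi-geodesic with $O(D)$-quasiconvex image, and tracking the constants produces exactly the claimed $(K(g,p)+R)$ bounds with $K(g,p)=O(\omega^2)$; in particular the quasiconvexity assertion follows formally once the unparameterized quasi-geodesic property is in hand.

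The core of the argument is the contraction estimate, and this is where nesting and the combinatorics of birecurrent train tracks enter. I would prove, for every essential subsurface $Y\subseteq S_{g,p}$, that $i\mapsto\pi_Y(v_i)$ is coarsely monotone in $\mathcal{C}(Y)$ with an ``active interval'' $[a_Y,b_Y]$ outside of which it is coarsely constant, and then feed this through the Behrstock inequality and the Bounded Geodesic Image theorem (both with universal constants). Two quantitative ingredients drive the monotonicity: (i) a birecurrent train track on $S_{g,p}$ has $O(\omega)$ branches, hence $O(\omega)$ vertex cycles, of pairwise geometric intersection number $O(\omega)$, so $\mathrm{diam}_{\mathcal{C}(Y)}\bigl(\pi_Y(V(\tau))\bigr)=O(\log\omega)$ for every $Y$ met essentially by $\tau$; and (ii) if $\tau'\prec\tau$ then carrying constrains how a vertex cycle of $\tau'$ can wrap inside $Y$ relative to one of $\tau$, so that $\pi_Y(V(\tau'))$ cannot jump past $\pi_Y(V(\tau))$ and the one-step displacement of the projection is $O(\omega)$ and consistently directed. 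I would then argue by induction on $\xi(Y)$ that $i\mapsto\pi_Y(v_i)$ is an $O(\omega^2)$-unparameterized quasi-geodesic of $\mathcal{C}(Y)$: the base case of annuli is elementary, and each step of the induction contributes an \emph{additive} error of order $O(\omega)$, coming from the projection diameters of the boundedly many subsurfaces active (equivalently, ``dominated'') inside $Y$. Running the induction up to $Y=S_{g,p}$ --- and using the distance formula, whose constants at complexity $\xi$ are themselves $O(\omega)$, to convert subsurface control into $\mathcal{C}_1$-distance control --- yields the strong contraction estimate with constant $O(\omega^2)$, closing the loop. The same intermediate statement, read at proper subsurfaces $Y$, is what proves the effective Masur--Mosher--Schleimer assertion advertised in the abstract.

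The main obstacle I anticipate is the \emph{effectivization} itself: the original Masur--Minsky and Masur--Mosher--Schleimer arguments are content with constants that merely ``depend on $S$,'' and every step above must be revisited quantitatively --- the monotonicity of $\pi_Y(v_i)$ under carrying, the control on the length of the active interval, and, most delicately, the accounting of how many subsurfaces can be simultaneously active or dominated by a given $Y$, with each constant tracked as a function of $\omega$. Getting the one-step projection displacement down to $O(\omega)$ rather than something worse requires a careful analysis of how a vertex cycle of $\tau'$ sits inside a tie neighborhood of $\tau$, and the induction on complexity must be arranged so that the per-level loss is additive and of size $O(\omega)$: a multiplicative or super-quadratic accumulation over the $\sim\omega$ levels is precisely the trap to avoid. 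Once these refinements are assembled, both displayed claims follow at once, with $K(g,p)=O(\omega(g,p)^2)$.
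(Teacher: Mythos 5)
Your outline takes a genuinely different route from the paper, and as written it has gaps that are at least as hard as the theorem itself. The paper never invokes strong contraction, subsurface projections, the Behrstock inequality, or the distance formula. Instead it effectivizes Masur--Minsky's nesting lemma directly: it bounds the non-recurrence constant of the incidence matrix $L_\tau$ (Theorem 4.3, via a unidirectionality argument in the universal cover), combines this with combinatorial-length bounds on vertex cycles and the intersection-number-versus-distance estimate (Theorem 3.2) to get $k(g,p)=O(\omega^2)$ in the nesting lemma, and then runs the nested-polyhedra argument ($PN(\tau_{i_{j+1}})\subset \operatorname{int}(PN(\tau_{i_j}))$ together with $\mathcal{N}_1(\operatorname{int}(PN(\tau)))\subset PN(\tau)$) to get the lower bound $d_T(\tau_{i_j},\tau_{i_k})\ge |k-j|$ along a subsequence with steps of size roughly $k(g,p)$. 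Quasiconvexity then follows from stability of quasi-geodesics in a uniformly hyperbolic space. No induction over subsurfaces is needed.

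The concrete gaps in your plan are these. First, you assert that the distance formula ``constants at complexity $\xi$ are themselves $O(\omega)$''; no such effective bound is available --- the thresholds and quasi-constants in the Masur--Minsky distance formula are produced by compactness/hierarchy arguments and their effectivization is a hard open problem, not an input you can cite. Since your contraction estimate is closed precisely by converting subsurface control into $\mathcal{C}_1$-distance via that formula, the whole argument rests on an unproven (and likely false at that precision) claim. Second, the estimate $\operatorname{diam}_{\mathcal{C}(Y)}(\pi_Y(V(\tau)))=O(\log\omega)$ cannot be extracted from intersection-number bounds of the type in Theorem 3.2 when $Y$ has small complexity (those bounds require $\omega(Y)$ large, and annuli behave entirely differently), so the base and low-complexity cases of your induction are unsupported. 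Third, the claim that each level of the induction contributes only an additive $O(\omega)$ error is exactly the delicate accounting you flag as ``the trap to avoid,'' but you give no mechanism for avoiding it; the number of subsurfaces simultaneously active along the sequence is not obviously $O(1)$ or even $O(\omega)$ per level. In short, your strategy replaces the paper's self-contained linear-algebra-plus-nesting argument with effective versions of the hierarchy machinery, each of which is a harder effectivization problem than the one being solved.
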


Masur-Mosher-Schleimer \cite{Mas-Mos-Sch} used Masur and Minsky's result to show that if $Y\subseteq S_{g,p}$ is any essential subsurface, then a sliding and splitting sequence on $S_{g,p}$ maps to a uniform unparameterized quasi-geodesic under the subsurface projection map to $\mathcal{C}(Y)$. Using Theorem $1.1$,  we show:

\begin{theorem} There exists a function $A(g,p)= O(\omega(g,p)^{2})$ satisfying the following. Suppose $Y\subseteq S_{g,p}$ is an essential subsurface, and let $(\tau_{i})_{i}$ be a splitting and sliding sequence of birecurrent train tracks on $S_{g,p}$. Then $(\tau_{i})_{i}$ projects to an $A(g,p)$-unparameterized quasi-geodesic in $\mathcal{C}_{1}(Y)$. 
\end{theorem}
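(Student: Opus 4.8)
The plan is to follow the reduction of Masur--Mosher--Schleimer \cite{Mas-Mos-Sch}, whose only non-effective ingredient is one application of the Masur--Minsky quasi-geodesity theorem \emph{inside} the subsurface $Y$; we replace that ingredient by Theorem 1.1 and check that every other constant entering the argument is universal, or at worst $O(\omega(g,p))$. If $Y=S_{g,p}$ there is nothing beyond Theorem 1.1: a splitting and sliding sequence of birecurrent train tracks is a nested train track sequence, and a single split or slide moves the vertex cycles by at most a universal $\mathcal C_1$-distance, so such a sequence has $O(1)$-bounded steps and Theorem 1.1 makes it an $O(\omega(g,p)^2)$-unparameterized quasi-geodesic of $\mathcal C_1(S_{g,p})$. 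So assume $Y\subsetneq S_{g,p}$ is a proper essential subsurface, and first suppose $Y$ is not an annulus. Write $\pi_Y$ for the subsurface projection to $\mathcal C(Y)$.

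Two elementary facts localize the problem. First, for a connected birecurrent track $\tau$ that cuts $Y$, the set $\pi_Y(\tau)$ of $Y$-projections of the vertex cycles of $\tau$ has diameter $O(1)$ in $\mathcal C(Y)$, since any two curves carried by a connected track are joined by a controlled surgery sequence; and a single split or slide changes $\pi_Y(\cdot)$ by $O(1)$ --- when both tracks cut $Y$ this is the coarse Lipschitz property of $\pi_Y$ applied to the nearby vertex cycles, and otherwise the projection merely switches on or off. Second, as in \cite{Mas-Mos-Sch}, the carrying relation forces a dichotomy: there is a sub-interval $I$ of the index set --- the ``$Y$-active'' indices --- outside of which $\pi_Y(\tau_i)$ is either undefined or lies within a universally bounded distance of a single point of $\mathcal C(Y)$ depending only on how the track sits near $\partial Y$. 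I would reprove this last statement with an explicit constant: the point is that once $\tau_i$ carries a curve disjoint from $Y$, every vertex cycle of $\tau_i$ that still cuts $Y$ can be surgered, inside $\tau_i$, across the boundary between carried curves cutting $Y$ and carried curves disjoint from $Y$, which pins its $\pi_Y$-value to within $O(1)$.

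The heart of the argument is the active interval. On $I$ I would show that the restrictions $\sigma_i:=\tau_i\cap Y$ --- each put in minimal position with $\partial Y$ and smoothed --- form a nested train track sequence in $Y$ with $O(1)$-bounded steps: the restriction of a birecurrent track to an essential subsurface is recurrent, carrying is preserved by restriction, and one split or slide of $\tau_i$ induces on $\sigma_i$ either the identity, a single split or slide, or a bounded combination of such moves supported near $\partial Y$, so in every case the vertex cycles of $\sigma_i$ and $\sigma_{i+1}$ are $O(1)$-close in $\mathcal C(Y)$. Capping the boundary of $Y$ off with punctures, one has $\omega(Y)\le \omega(g,p)$ --- immediate from $\chi(Y)\ge \chi(S_{g,p})$ together with $\mathrm{genus}(Y)\le g$ --- so Theorem 1.1 applied to the sequence $(\sigma_i)_{i\in I}$ in $\mathcal C_1(Y)$ yields that it is a $\bigl(K(\omega(Y))+O(1)\bigr)=O(\omega(g,p)^2)$-unparameterized quasi-geodesic of $\mathcal C_1(Y)$. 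Since $\pi_Y(\tau_i)$ agrees with the vertex cycles of $\sigma_i$ up to $O(1)$, and by the previous paragraph the indices outside $I$ contribute only a coarsely constant, bounded-diameter tail on each side, the full projected sequence $(\pi_Y(\tau_i))_i$ is an $A(g,p)=O(\omega(g,p)^2)$-unparameterized quasi-geodesic of $\mathcal C_1(Y)$. When $Y$ is an annulus, $\mathcal C_1(Y)$ is quasi-isometric to $\mathbb Z$ and Theorem 1.1 does not apply, but the annular (twisting) coordinate of the vertex cycles of $\tau_i$ changes by $O(1)$ per split or slide and accumulates with uniformly bounded backtracking once $\tau_i$ crosses the core of $Y$ essentially, which gives the conclusion with a universal, hence $O(\omega(g,p)^2)$, constant.

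The step I expect to be the main obstacle is the one doing the genuine work: verifying that restriction to $Y$ turns a single splitting or sliding move into an $O(1)$-bounded change in $\mathcal C(Y)$ --- equivalently that $(\sigma_i)_{i\in I}$ is an honest nested train track sequence in $Y$ with uniformly bounded steps --- together with the accompanying claim that the ``inactive'' projection is coarsely constant with a \emph{universal} bound. These are precisely the combinatorial lemmas of \cite{Mas-Mos-Sch} that must be re-derived while keeping track of constants; the only conceptual point of the proof is that none of these constants, nor the inequality $\omega(Y)\le\omega(g,p)$, degrades the quadratic bound supplied by Theorem 1.1.
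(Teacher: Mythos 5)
Your proposal follows the same strategy as the paper: run the Masur--Mosher--Schleimer reduction, substitute Theorem 1.1 for the Masur--Minsky quasi-geodesity input, and verify that every other constant entering the argument is uniform or controlled by $\omega(g,p)$; your observation that a single split or slide gives $O(1)$-bounded steps (via an intersection-number bound and Theorem 3.2) and your inequality $\omega(Y)\le\omega(g,p)$ both match what the paper does. The one substantive divergence is your construction of the induced sequence: you take $\sigma_i:=\tau_i\cap Y$ in minimal position with $\partial Y$ and smooth, whereas the paper (following \cite{Mas-Mos-Sch}) passes to the cover $S^{Y}$, forms the induced track $\tau_i|Y$ as the union of branches of the lifted track traversed by essential carried curves, and then takes $\sigma_i$ to be the \emph{minimal subtrack of $\tau_i|Y$ carrying a fixed vertex cycle}. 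The naive intersection is not a train track (it has loose ends on $\partial Y$, and curves carried by $\tau_i$ that cut $Y$ need not be visible in $\tau_i\cap Y$), and it is precisely the covering-space construction that makes recurrence of $\sigma_i$ automatic and makes the key claim --- that one split or slide of $\tau_i$ induces at most one split or slide of $\sigma_i$ --- provable. You correctly flag this step as the main obstacle and defer to the MMS combinatorial lemmas, so this is a fixable choice of setup rather than a gap in the strategy; note also that your explicit treatment of the inactive indices and of the annular case goes slightly beyond what the paper writes down.
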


Let $H_{g}$ denote the genus $g$ handlebody and let $D(g)\subset \mathcal{C}_{1}(S_{g})$ denote the set of \textit{meridians}, curves on $S_{g}$ that bound disks in $H_{g}$. Also due to Masur and Minsky \cite{Mas-MurIII} is the fact that any two meridians in $D(g)$ can be connected by a $15$-bounded nested train track sequence. Therefore, as a corollary of Theorem $1.1$, we obtain:

\begin{corollary} There exists a function $f(g)= O(g^{2})$ such that $D(g)$ is an $f(g)$-quasiconvex subset of $\mathcal{C}_{1}(S_{g})$. 

\end{corollary}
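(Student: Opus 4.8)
The plan is to derive the corollary from Theorem 1.1 and the quoted fact of Masur-Minsky, the one extra input being that the nested train track sequence joining two meridians can be chosen so that its vertex cycles are themselves meridians. To set up, I will fix two meridians $x, y \in D(g)$ and an arbitrary geodesic $\gamma$ of $\mathcal{C}_1(S_g)$ with endpoints $x$ and $y$; the goal is to show that $\gamma$ lies in a neighborhood of $D(g)$ of radius $O(g^2)$.

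First I will invoke \cite{Mas-MurIII} to obtain a $15$-bounded nested train track sequence $(\tau_i)_i$ connecting $x$ to $y$, and I will record from that construction the refinement that $x$ is a vertex cycle of the first track in the sequence, $y$ a vertex cycle of the last, and every vertex cycle of every $\tau_i$ bounds a disk in $H_g$. Writing $\Sigma \subseteq \mathcal{C}_0(S_g)$ for the union of all these vertex cycles, this yields the inclusions $\{x,y\} \subseteq \Sigma \subseteq D(g)$. (If the construction only guarantees that the vertex cycles of each $\tau_i$ lie within some universal distance $c$ of $D(g)$, the argument below goes through with $f(g)$ increased by $c$.)

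Next I will apply Theorem 1.1 to $(\tau_i)_i$ with $p = 0$ and $R = 15$. Since $\omega(g,0) = 3g-4$, the theorem produces a function $K(g,0) = O((3g-4)^2)$ and asserts that $\Sigma$ is $(K(g,0)+15)$-quasiconvex in $\mathcal{C}_1(S_g)$; note $K(g,0)+15 = O(g^2)$. As $x$ and $y$ both lie in $\Sigma$, the definition of a quasiconvex subset forces $\gamma$ into the $(K(g,0)+15)$-neighborhood of $\Sigma$, hence into the $(K(g,0)+15)$-neighborhood of $D(g)$. Taking $f(g) := K(g,0)+15 = O(g^2)$ gives exactly the claim that $D(g)$ is $f(g)$-quasiconvex.

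The one genuinely non-formal point --- and hence the step I expect to be the main obstacle --- is the claim that the Masur-Minsky sequence between two meridians can be realized by train tracks whose vertex cycles are meridians, equivalently that the associated subset $\Sigma$ has bounded Hausdorff distance from $D(g)$. This has to be checked against their construction; everything downstream of it is a direct appeal to Theorem 1.1 and the definition of quasiconvexity, together with the elementary estimate $(3g-4)^2 = O(g^2)$.
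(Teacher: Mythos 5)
Your proposal is correct and follows essentially the same route as the paper: the paper likewise cites Masur--Minsky's fact that any two meridians are joined by a $15$-bounded nested train track sequence whose associated path of vertex cycles lies in $D(g)$ (a ``well-nested curve replacement sequence''), and then applies Theorem 1.1 with $R=15$ to get an $O(g^2)$ quasiconvexity constant. The non-formal point you flag --- that the vertex cycles along the sequence are themselves meridians --- is exactly the input the paper takes from \cite{Mas-MurIII} without further argument.
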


The \textit{mapping class group}, denoted $\mbox{Mod}(S)$, is the group of isotopy classes of orientation preserving homeomorphisms of a surface $S$ (see \cite{Far-Mar} for a thorough exposition). 

As an application of Corollary $1.3$, we obtain a more effective approach for detecting when a pseudo-Anosov mapping class $\phi$ is generic. Here, \textit{generic} means that the stable lamination of $\phi$ is not a limit of meridians; the term generic is warranted by a theorem of Kerckhoff \cite{Ker}, which states that the set of all projective measured laminations which \textit{are} limits of meridians constitutes a measure $0$ subset of $\mathcal{PML}(S)$, the space of all projective measured laminations on a surface $S$. 

In what follows, let $d_{\mathcal{C}(S)}$ denote distance in $\mathcal{C}_{1}(S)$; when there is no confusion, the reference to $S$ will be omitted.  Masur and Minsky \cite{Mas-Mur} showed that $\mathcal{C}_{1}(S)$ is a $\delta$-hyperbolic metric space. 

Using Corollary $1.2$,  work of Abrams-Schleimer \cite{Ab-Sc}, and the fact that the curve graphs are uniformly hyperbolic (shown by the author in \cite{Aoug}, and independently by Bowditch \cite{BowII}, Clay-Rafi-Schleimer \cite{Cl-Ra-Sc}, and Hensel-Przytycky-Webb \cite{Hen-Prz-We}), we have:

\begin{corollary}
There exists a function $r(g)=O(g^{2})$ such that $\phi \in \mbox{Mod}(S_{g})$ is a generic pseudo-Anosov mapping class if and only if there exists some $k \in \mathbb{N}$ such that for all $n>k$, 
\[ d_{\mathcal{C}}(D(g),\phi^{n}(D(g)))> r(g). \]

\end{corollary}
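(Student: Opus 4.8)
The plan is to follow the argument of Abrams--Schleimer \cite{Ab-Sc}, but feeding in the effective quasiconvexity constant $f(g) = O(g^{2})$ from Corollary $1.3$ and the \emph{uniform} hyperbolicity constant $\delta$ in place of the ineffective constants supplied by Masur--Minsky, and then tracking how the resulting threshold $r(g)$ depends on them. The two implications of the equivalence play quite different roles: one is purely qualitative and can be quoted, and the other carries all of the quantitative content.

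For the qualitative implication, recall that by Klarreich's identification of $\partial\mathcal{C}_{1}(S_{g})$ with the space of ending laminations, $\phi$ is generic precisely when the stable lamination $\lambda^{+}$ of $\phi$ does not lie in $\Lambda$, the limit set of $D(g)$ in $\partial\mathcal{C}_{1}(S_{g})$. Since $D(g)$ is quasiconvex and $\mathcal{C}_{1}(S_{g})$ is hyperbolic, the argument of \cite{Ab-Sc} shows that if $\phi$ is generic then $d_{\mathcal{C}}(D(g),\phi^{n}(D(g))) \to \infty$: otherwise there are meridians $a_{i}, b_{i} \in D(g)$ with $d_{\mathcal{C}}(a_{i},\phi^{n_{i}}(b_{i}))$ bounded and $n_{i}\to\infty$, and then the North--South dynamics of $\phi$ on $\mathcal{C}_{1}(S_{g}) \cup \partial\mathcal{C}_{1}(S_{g})$ together with the closedness of $\Lambda$ forces $\lambda^{+}\in\Lambda$ (here one also uses that genericity is unchanged under $\phi\mapsto\phi^{-1}$, so that $b_{i}$ cannot accumulate on $\lambda^{-}$). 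In particular $\phi$ generic implies $d_{\mathcal{C}}(D(g),\phi^{n}(D(g))) > r(g)$ for all sufficiently large $n$, whatever the value of $r(g)$; no effective estimate is needed for this direction.

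The reverse implication is the contrapositive of: if $\phi$ is not generic then $d_{\mathcal{C}}(D(g),\phi^{n}(D(g))) \le r(g)$ for all $n$, and this is where Corollary $1.3$ and uniform hyperbolicity are used. If $\phi$ is not generic then $\lambda^{+}\in\Lambda$, so a geodesic ray $\rho$ from a meridian to $\lambda^{+}$ is a limit of geodesics joining meridians and hence, by Corollary $1.3$ and $\delta$-thinness, lies in the $(f(g)+O(\delta))$-neighborhood of $D(g)$, with $f(g)=O(g^{2})$. The quasi-axis $\gamma_{\phi}$ of the loxodromic isometry $\phi$ shares the ideal point $\lambda^{+}$ with $\rho$, so a forward subray of $\gamma_{\phi}$ fellow-travels $\rho$ and thus also lies in the $(f(g)+O(\delta))$-neighborhood of $D(g)$. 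Choosing a point $p$ far out along this subray and a meridian $m$ with $d_{\mathcal{C}}(m,p)\le f(g)+O(\delta)$, for every $n\ge 0$ the point $\phi^{n}(m)\in\phi^{n}(D(g))$ satisfies
\[ d_{\mathcal{C}}(\phi^{n}(m),D(g)) \le d_{\mathcal{C}}(m,p) + d_{\mathcal{C}}(\phi^{n}(p),D(g)) \le 2f(g)+O(\delta), \]
because $\phi$ translates $\gamma_{\phi}$ toward $\lambda^{+}$, so $\phi^{n}(p)$ lies even further out along the same subray and remains in the same neighborhood. Hence $d_{\mathcal{C}}(D(g),\phi^{n}(D(g))) \le 2f(g)+O(\delta) =: r(g)$, which is $O(g^{2})$, proving the contrapositive.

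The main difficulty is bookkeeping rather than a new idea. One must check that the constant coming out of the Abrams--Schleimer argument is genuinely controlled by the quasiconvexity constant of $D(g)$ and the hyperbolicity constant alone, so that the $O(g^{2})$ of Corollary $1.3$ and the uniform $\delta$ are not swamped by hidden $g$-dependence (for instance from the local infiniteness of the curve graph, or from the passage to the Gromov boundary). One must also have the relevant boundary technology in hand --- Klarreich's theorem, North--South dynamics for pseudo-Anosovs on $\mathcal{C}_{1}(S_{g}) \cup \partial\mathcal{C}_{1}(S_{g})$, and the invariance of genericity under inversion --- all of which are qualitative but all of which are needed for the equivalence to hold with the single threshold $r(g)$.
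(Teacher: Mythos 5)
Your proposal is correct and follows the same route as the paper, which simply invokes the Abrams--Schleimer argument with the effective inputs: Remark 1.5 states that one may take $r(g)=2\delta+2f(g)$, matching your $2f(g)+O(\delta)$. You supply more of the underlying boundary/dynamics detail than the paper does, but the substance is identical.
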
 

\begin{remark} By the argument of Abrams-Schleimer, it suffices to take $r(g)= 2\delta+2f(g)$, for $\delta$ the hyperbolicity constant of $\mathcal{C}_{1}$, and $f(g)$ as in the statement of Corollary $1.3$. $\Box$
\end{remark}

We also note that quasiconvexity of $D(g)$ and the fact that splitting sequences map to quasi-geodesics under subsurface projection are main ingredients in the proof due to Masur and Schleimer \cite{Mas-Sch} that the disk complex is $\delta$-hyperbolic. Thus, the effective control discussed above is perhaps a first step to studying the growth of the hyperbolicity constant of the disk complex.

\textbf{How the main theorem is proved.}
The proof of Theorem $1.1$ relies on the ability to control

\begin{enumerate}

\item the hyperbolicity constant $\delta(g,p)$ of $\mathcal{C}_{1}$;
\item $B= B(g,p)$, a bound on the diameter of a set of vertex cycles of a fixed train track $\tau \subset S_{g,p}$; and 
\item the ``nesting lemma constant'' $k(g,p)$. 

\end{enumerate}

As mentioned above, due to work of the author, Bowditch, Clay-Rafi-Schleimer and Hensel-Przytycky-Webb, curve graphs are uniformly hyperbolic. Furthermore, Hensel-Przytycky-Webb \cite{Hen-Prz-We} show that all curve graphs are $17$-hyperbolic. 

Regarding $(2)$, The author \cite{Aoug} has also shown that for sufficiently large $\omega, B(g,p)\leq 3$. 

Therefore, all that remains is to analyze the growth of $k(g,p)$, which we address in section $5$ by following Masur and Minsky's original argument while keeping track of the constants that pop up along the way. However, in order to do this, we have need of an effective criterion for determining when a train track $\tau$ is non-recurrent, which we address in section $4$.

\textbf{Organization of paper.} In section $2$, we review some preliminaries about curve complexes and subsurface projections. In section $3$, we review train tracks on surfaces and bounds on curve graph distance given by intersection number, as obtained in previous work. In section $4$, we obtain an effective way of detecting non-recurrence of train tracks by analyzing the linear algebra of the corresponding branch-switch incidence matrix. In section $5$, we obtain an effective version of Masur and Minsky's nesting lemma, which is the main tool needed to prove Theorem $1.1$. In section $6$ we complete the proofs of Theorems $1.1$, $1.2$, and Corollary $1.3$. 

\textbf{Acknowledgements.} The author would primarily like to thank his adviser, Yair Minsky, for his guidance and for many helpful suggestions. He would also like to thank Ian Biringer, Catherine Pfaff, Saul Schleimer, and Harold Sultan for their time and for the many motivating conversations they've had with the author regarding this work. 

\section{Preliminaries: Coarse Geometry, Combinatorial Complexes and Subsurface Projections}
Let $(X,d_{X})$, $(Y,d_{Y})$ be metric spaces. For some $k\geq 1$,  a relation $f:X\rightarrow Y$ is a $k$-\textit{quasi-isometric embedding } of $X$ into $Y$ if for any $x_{1},x_{2}\in X$ we have 
\[ d_{X}(x_{1},x_{2})\leq kd_{Y}(f(x_{1}),f(x_{2}))+k. \]
Since $f$ is not necessarily a map, $f(x), f(y)$ need not be singletons, and the distance $d_{Y}(f(x),f(y))$ is defined to be the diameter in the metric $d_{Y}$ of the union $f(x)\cup f(y)$. If the $k$-neighborhood of $f(X)$ is all of $Y$, then $f$ is a $k$-\textit{quasi-isometry} between $X$ and $Y$, and we refer to $X$ and $Y$ as being \textit{quasi-isometric}. 

Given an interval $\left[a,b\right]\in \mathbb{Z}$, a $k$-\textit{quasi-geodesic} in $X$ is a $k$-quasi-isometric embedding $f:\left[a,b\right] \rightarrow X$. If $f:\left[a,b\right]\rightarrow X$ is any relation such that there exists an interval $\left[c,d\right]$ and a strictly increasing function $g:\left[c,d\right] \rightarrow \left[a,b\right]$ such that $f\circ g$ is a $k$-quasigeodesic, we say that $f$ is a $k$-\textit{unparameterized quasi-geodesic}. In this case we also require that for each $i\in \left[c,d-1\right]$, the diameter of $f\left(\left[ g(i),g(i+1)  \right]\right)$ is at most $k$. We will sometimes refer to a quasi-geodesic by its image in the metric space $X$. 

A simple closed curve on $S_{g,p}$ is $\textit{essential}$ if it is homotopically non-trivial, and not homotopic into a neighborhood of a puncture.

The \textit{curve complex} of $S_{g,p}$, denoted $\mathcal{C}(S_{g,p})$, is the simplicial complex whose vertices correspond to isotopy classes of essential simple closed curves on $S_{g,p}$, and such that $k+1$ vertices span a $k$-simplex exactly when the corresponding $k+1$ isotopy classes can be realized disjointly on $S_{g,p}$. $\mathcal{C}(S)$ is made into a metric space by identifying each simplex with the standard Euclidean simplex with unit length edges.  Let $\mathcal{C}_{k}(S)$ denote the $k$-skeleton of $\mathcal{C}(S)$.

$\mathcal{C}(S)$ is a locally infinite, infinite diameter metric space. By a theorem of Masur and Minsky \cite{Mas-Mur}, $\mathcal{C}(S)$ is $\delta$-$\textit{hyperbolic}$ for some $\delta=\delta(S)>0$, meaning that the $\delta$-neighborhood of the union of any two edges of a geodesic triangle contains the third edge. 

$\mathcal{C}(S)$ admits an isometric (but not properly discontinuous) action of $\mbox{Mod}(S)$, and it is a flag complex, so that its combinatorics are completely encoded by $\mathcal{C}_{1}(S)$, the $\textit{curve graph}$; note also that $\mathcal{C}(S)$ is quasi-isometric to $\mathcal{C}_{1}(S)$, and therefore to study the coarse geometry of $\mathcal{C}$ it suffices to consider the curve graph. Let $d_{\mathcal{C}}$ denote distance in the curve graph. 

If $p\neq 0$, we can consider more general combinatorial complexes, which also allow vertices to represent essential arcs connecting punctures, up to isotopy. As such, define $\mathcal{AC}(S)$, the \textit{arc and curve complex of $S$} to be the simplicial complex whose vertices correspond to isotopy classes of essential simple closed curves and arcs on $S$. As with $\mathcal{C}(S)$, two vertices are connected by an edge if and only if the corresponding isotopy classes can be realized disjointly, and the higher dimensional skeleta are defined by requiring $\mathcal{AC}(S)$ to be flag. As with $\mathcal{C}$, denote by $\mathcal{AC}_{k}(S)$ the $k$-skeleton of $\mathcal{AC}(S)$. It is worth noting the $\mathcal{AC}(S)$ is quasi-isometric to $\mathcal{C}(S)$, with quasi-constants not depending on the topological type of $S$. 

Let $Y \subseteq S$ be an essential, embedded subsurface of $S$ which is not a peripheral annulus. Then there is a covering space $S^{Y}$ associated to the inclusion $\pi_{1}(Y)<\pi_{1}(S)$. While $S^{Y}$ is not-compact, note that the Gromov compactification of $S^{Y}$ is homeomorphic to $Y$, and via this homeomorphism we identify $\mathcal{AC}(Y)$ with $\mathcal{AC}\left(S^{Y}\right)$. Then, given $\alpha \in \mathcal{AC}_{0}(S)$, the \textit{subsurface projection map} $\pi_{Y}:\mathcal{AC}(S)\rightarrow \mathcal{AC}(Y)$ is defined by setting $\pi_{Y}(\alpha)$ equal to its preimage under the covering map $S^{Y}\rightarrow S$. 

Technically, this defines a map from $\mathcal{AC}_{0}(S)$ into $2^{\mathcal{AC}_{0}(Y)}$ since their may be multiple connected components of the pre-image of a curve or arc , but the image of any point in the domain is a bounded subset of the range. Thus to make $\pi_{Y}$ a map we can simply choose some component of this pre-image for each point in the domain, and then extend the map $\pi_{Y}$ simplicially to the higher dimensional skeleta. 

Given an arc $a\in \mathcal{AC}(S)$, there is a closely related simple closed curve $\tau(a) \in \mathcal{C}_{1}(S)$, obtained from $a$ by surgering along the boundary components that $a$ meets. More concretely, let $\mathcal{N}(a)$ denote a thickening of the union of $a$ together with the (at most two) boundary components of $S$ that $a$ meets, and define $\tau(a)\in 2^{\mathcal{C}_{1}(S)}$ to be the components of $\partial(N(a))$. 

Thus we obtain a \textit{subsurface projection} map 
\[\psi_{Y}:= \tau \circ \pi_{Y}: \mathcal{C}(S)\rightarrow \mathcal{C}(Y)\]
 for $Y\subseteq S$ any essential subsurface. Here, a subsurface is \textit{essential} if it is not a thrice punctured sphere or an annulus whose core curve is homotopic into a neighborhood of a puncture of $S$.

Then given $\alpha,\beta \in \mathcal{C}(S)$, define $d_{Y}(\alpha,\beta)$ by

\[ d_{Y}(\alpha,\beta):= \mbox{diam}_{\mathcal{C}(Y)}(\psi_{Y}(\alpha)\cup \psi_{Y}(\beta)).\]

\section{ Train tracks and Intersection Numbers}
In this section, we recall some basic terminology of train tracks on surfaces; we refer to Penner-Harer \cite{Pen-Har} and Mosher \cite{Mos} for a more in-depth discussion. A \textit{train track} $\tau \subset S$ is an embedded $1$-complex whose vertices and edges are called \textit{switches} and \textit{branches}, respectively. Branches are smooth parameterized paths with well-defined tangent vectors at the initial and terminal switches. At each switch $v$ there is a unique line $L\subset T_{v}S$ such that the tangent vector of any branch incident at $v$ coincides with $L$. 

As part of the data of $\tau$, we choose a preferred direction along this line at each switch $v$; a half branch incident at $v$ is called \textit{incoming} if its tangent vector at $v$ is parallel to this chosen direction, and is called \textit{outgoing} if it is anti-parallel. Therefore at each switch, the incident half branches are partitioned disjointly into two orientation classes, the \textit{incoming germ} and \textit{outgoing germ}.

 The valence of each switch must be at least $3$ unless $\tau$ has a connected component consisting of a simple closed curve; in this case $\tau$ has one bivalent switch for such a component. 

Finally, we require that every complementary component of $S\setminus \tau$ has a negative generalized Euler characteristic, that is 
\[ \chi(Q)- \frac{1}{2}V(Q) <0\]
for any complementary component $Q$; here $\chi(Q)$ is the usual Euler characteristic and $V(Q)$ is the number of cusps on $\partial (Q)$.

A \textit{train path} is a path $\gamma :[0,1] \rightarrow \tau$, smooth on $(0,1)$, which traverses a switch only by entering via one germ and exiting from the other; a \textit{closed train path} is a train path  with $\gamma(0)=\gamma(1)$. A \textit{proper closed train path} is a closed train path with $\gamma'(0)= \gamma'(1)$; here $\gamma'(t)$ is the unit tangent vector to the path $\gamma$ at time $t$.

 Let $\mathcal{B}$ denote the set of branches of $\tau$; then a non-negative, real-valued function $\mu:\mathcal{B}\rightarrow \mathbb{R}_{+}$ is called a \textit{transverse measure} on $\tau$ if for each switch $v$ of $\tau$, we have 
\[ \sum_{b\in i(v)}\mu(b) =\sum_{b'\in o(v)}\mu(b')  \]
where $i(v)$ is the set of incoming branches, and $o(v)$ the set of outgoing ones. These are called the \textit{switch conditions}. $\tau$ is called \textit{recurrent} if it admits a strictly positive transverse measure, that is, one that assigns a positive weight to every branch.  A switch of $\tau$ is called \textit{semi-generic} if exactly one of the two germs of half branches consists of a single half branch. $\tau$ is called semi-generic if all switches are semi-generic, and $\tau$ is \textit{generic} if $\tau$ is semi-generic and each switch has degree at most $3$.  $\tau$ is called \textit{large} if each connected component of its complement is simply connected. 

Any positive scaling of a transverse measure is also a transverse measure, and therefore the set of all transverse measures, viewed as a subset of $\mathbb{R}^{\mathcal{B}}$ is a cone over a compact polyhedron in projective space. Let $P(\tau)$ denote the projective polyhedron of transverse measures.  A projective measure class $[\mu]\in P(\tau)$ is called a \textit{vertex cycle} if it is an extreme point of $P(\tau)$. It is worth noting that if $\tau$ is any train track on $S$, there exists a generic, recurrent train track $\tau'$ such that $P(\tau)=P(\tau')$. 

A lamination $\lambda$ is \textit{carried} by $\tau$ if there is a smooth map $\phi:S\rightarrow S$ called the \textit{carrying map for} $\lambda$ which is isotopic to the identity, $\phi(\lambda)\subset \tau$, and such that the restriction of the differential $d\phi$ to any tangent line of $\lambda$ is non-singular. If $c$ is any simple closed curve carried by $\tau$, then $c$ induces an integral transverse measure called the \textit{counting measure}, for which each branch of $\tau$ is assigned the natural number equaling the number of times the image of $c$ under its carrying map traverses that branch.

A subset $\tau'\subset \tau$ is called a \textit{subtrack} of $\tau$ if it is also a train track on $S$. In this case, we write $\tau'<\tau$.

Given any train track $\tau$ with branch set $\mathcal{B}$, we can distinguish branches as being one of three types: if $b\in \mathcal{B}$ and both half branches of $b$ are the only half branch in their respective germs, $b$ is called \textit{ large}. If both half branches of $b$ are in germs containing more than one half branch, $b$ is \textit{ small}; otherwise $b$ is \textit{mixed} (Figure $(2)$).

\begin{figure}[H]
\centering
	\includegraphics[width=3.5in]{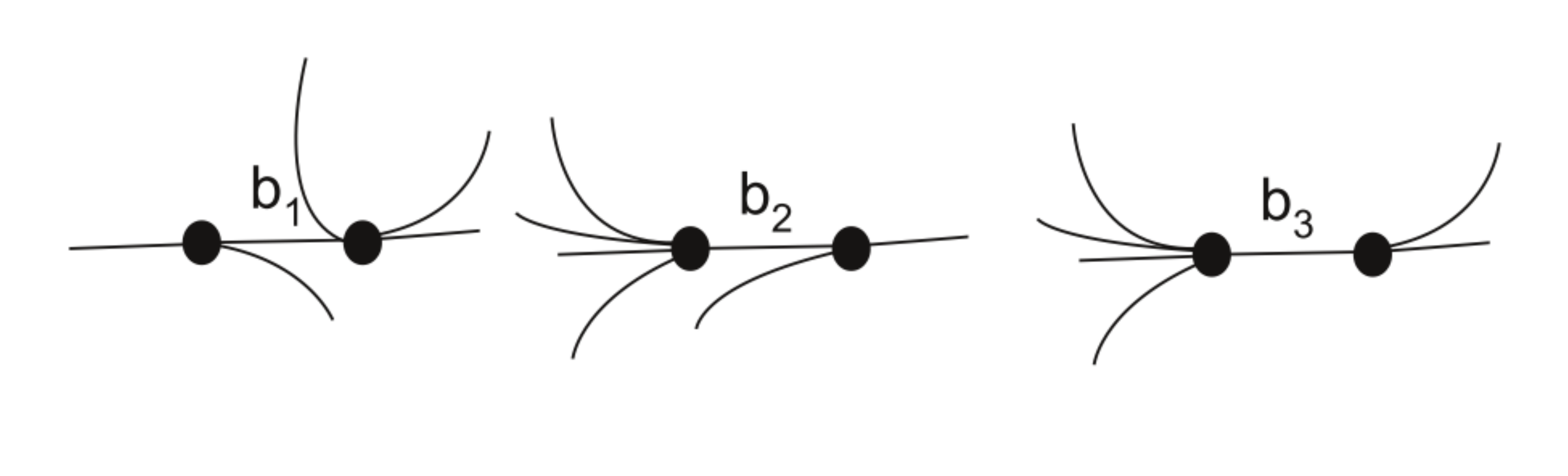}
\caption{ LEFT: $b_{1}$ is small; MIDDLE: $b_{2}$ is mixed; RIGHT: $b_{3}$ is large.}
\end{figure}

If $[v]$ is a vertex cycle of $\tau$, then there is a unique (up to isotopy) simple closed curve $c(v)$ such that $c$ is carried by $\tau$, and the counting measure on $c$ is an element of $[v]$. Therefore, if $[v_{1}],[v_{2}]$ are two vertex cycles of $\tau$, we can define the distance $d([v_{1}],[v_{2}])$ between them to be the curve graph distance between their respective simple closed curve representatives:

\[ d([v_{1}],[v_{2}]):= d_{\mathcal{C}}(c(v_{1}),c(v_{2})) \]

Using this, we can also define the distance between two train tracks $\tau$ and $\tau'$ to be the distance between their vertex cycle sets:
\[ d_{T}(\tau,\tau'):= \min \left\{d([v_{\tau}],[v_{\tau'}]): [v_{\tau}] \hspace{1 mm} \mbox{is a vertex cycle of} \hspace{1 mm} \tau \hspace{1 mm} \mbox{and} \hspace{1 mm} [v_{\tau'}] \hspace{1 mm} \mbox{is a vertex cycle of} \hspace{1 mm} \tau'  \right\} \]

A \textit{nested train track sequence} is a sequence $(\tau_{i})_{i}$ on $S_{g,p}$ of birecurrent train tracks such that $\tau_{j}$ is carried by $\tau_{j+1}$ for each $j$. This in turn determines a collection of vertices in $\mathcal{C}_{1}(S_{g,p})$, by associating the track $\tau_{j}$ with its collection of vertices. 

Given $R>0$, a nested train track sequence $(\tau_{i})_{i}$ is said to have $R$-\textit{bounded steps} if 
\[ d_{T}(\tau_{i},\tau_{i+1})\leq R \]
for each $i$. An important special case is the example of a \textit{splitting and sliding sequence}. This is any train track sequence where $\tau_{i}$ is obtained from $\tau_{i+1}$ via one of two combinatorial moves, \textit{splitting} (Figure $2$) and \textit{sliding} (Figure $3$). 

\begin{figure}[H]
\centering
	\includegraphics[width=3.5in]{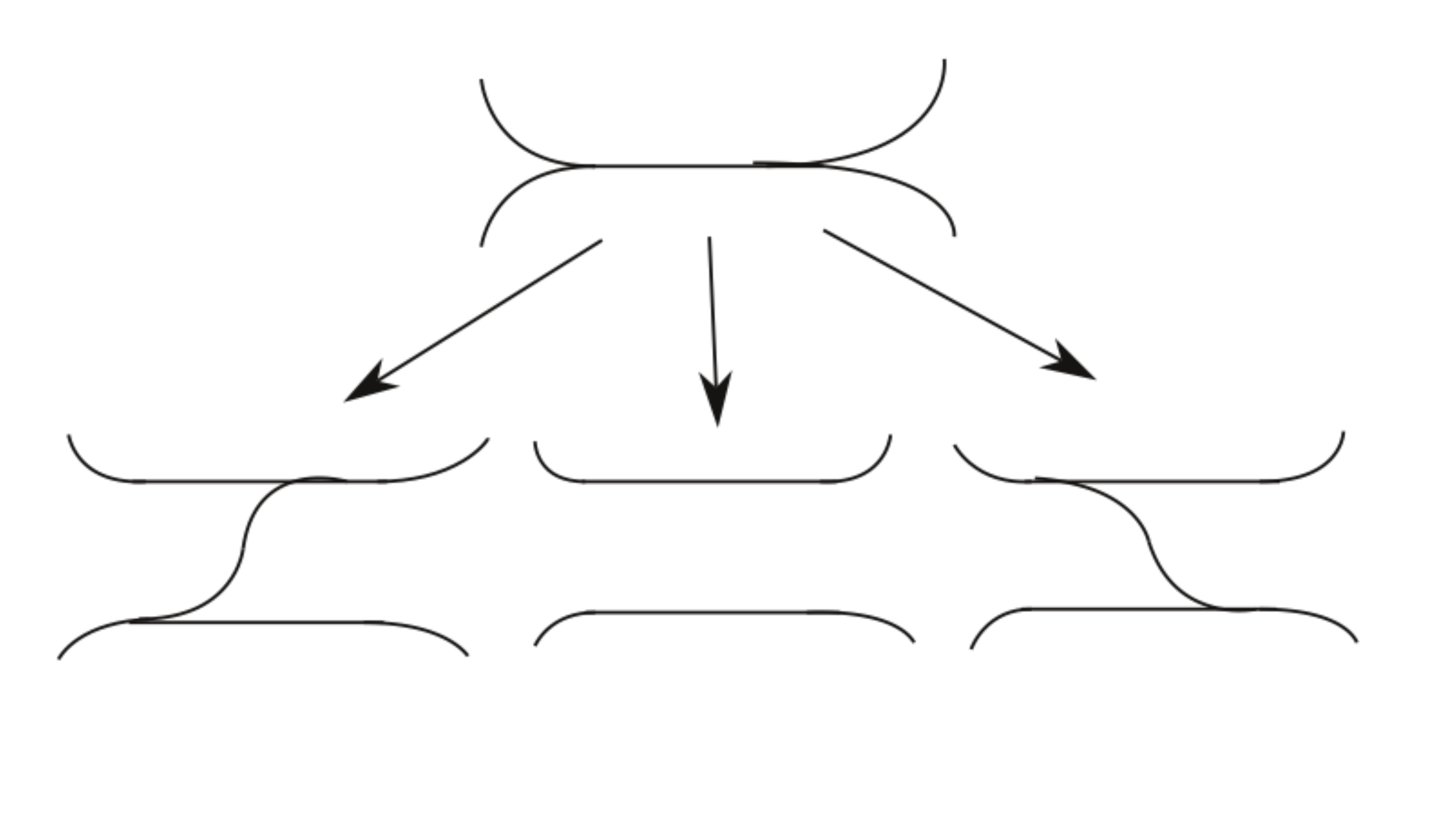}
\caption{  Any large branch admits three possible ``splittings''.}
\end{figure}

\begin{figure}[H]
\centering
	\includegraphics[width=3.5in]{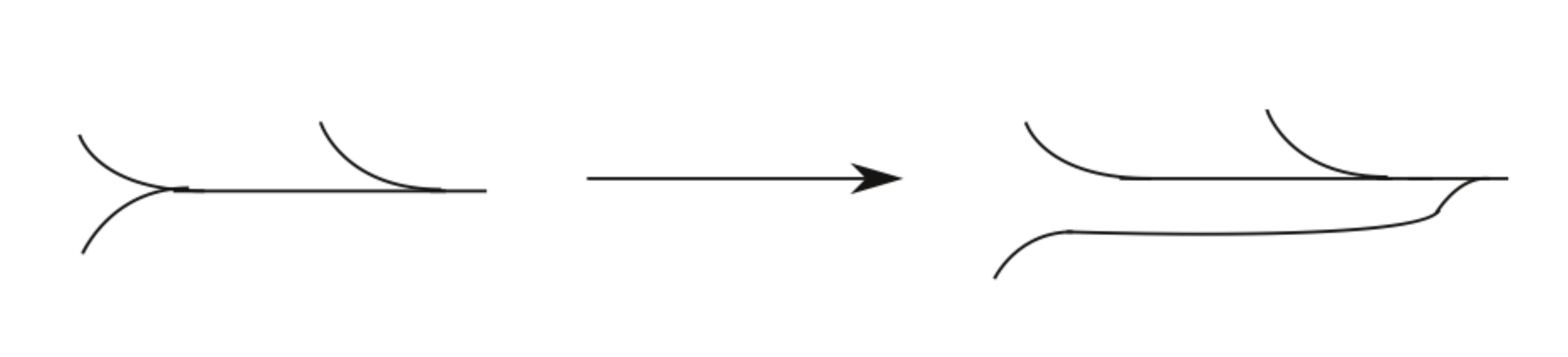}
\caption{ Any mixed branch admits a ``sliding'', as above.}
\end{figure} 

We will need the following theorem, as seen in previous work of the author \cite{Aoug}:

\begin{theorem} There exists a natural number $n\in \mathbb{N}$ such that if $\omega(g,p)>n$, the following holds: 
suppose $\tau\subset S_{g,p}$ is any train track and $[v_{1}],[v_{2}]$ are vertex cycles of $\tau$. Then 
\[ d([v_{1}],[v_{2}])\leq 3. \]

\end{theorem}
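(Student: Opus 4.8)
The overall plan is to exhibit explicit paths of length at most $3$ in $\mathcal{C}_1(S_{g,p})$ between $c(v_1)$ and $c(v_2)$, exploiting the rigidity of vertex cycles rather than any bound on $i(c(v_1),c(v_2))$: the latter quantity genuinely grows with $\omega(g,p)$ for some tracks, so an estimate of the form $d_{\mathcal{C}}(\alpha,\beta)\le F(i(\alpha,\beta))$ cannot by itself yield an absolute constant.

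First I would normalize the track. By the fact recalled above, replace $\tau$ by a generic recurrent train track with the same projective polytope of transverse measures; this changes neither vertex cycle, so we may assume $\tau$ is generic and recurrent. Two standard facts are then available: the number of branches of $\tau$ is $O(\omega(g,p))$ (genericity together with the generalized Euler characteristic condition), and a vertex cycle runs over each branch of $\tau$ at most twice. In particular $c(v_1)$ and $c(v_2)$ are both carried by $\tau$ with all weights at most $2$, so the configuration $c(v_1)\cup c(v_2)\subset N(\tau)$ has bounded local complexity everywhere.

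Next, dichotomize on whether $c(v_1)\cup c(v_2)$ fills $S_{g,p}$. The easy direction is a lemma I would isolate for reuse: if $\alpha\cup\beta$ does not fill $S_{g,p}$, then $S_{g,p}\setminus(\alpha\cup\beta)$ contains an essential, non-peripheral simple closed curve $\gamma$ — the hypothesis that $\omega$ is large is needed here only to discard the finitely many degenerate configurations in which every complementary curve is peripheral — and $\gamma$ is disjoint from both $\alpha$ and $\beta$, so $d_{\mathcal{C}}(\alpha,\beta)\le 2$. Applying this to $\alpha=c(v_1)$, $\beta=c(v_2)$ disposes of the non-filling case with room to spare.

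The remaining case, where $c(v_1)\cup c(v_2)$ fills $S_{g,p}$, is the crux, and is where the hypothesis ``$\omega$ sufficiently large'' is genuinely used. Now $\tau$ itself fills $S_{g,p}$, so its $\Theta(\omega)$ complementary regions are all polygons or once-punctured polygons with cusps. The target is a single intermediate curve $x$ that is disjoint from $c(v_1)$ and for which $x\cup c(v_2)$ does \emph{not} fill: the lemma above then gives $d_{\mathcal{C}}(x,c(v_2))\le 2$, hence $d([v_1],[v_2])\le 3$. To produce $x$ I would run a counting argument over the complementary regions of $\tau$: since $c(v_1)$ carries weight at most $2$ on each of the $O(\omega)$ branches, it meets all but a bounded number of these regions in a boundary-parallel way, so once $\omega$ exceeds an absolute threshold $n$ there is a subsurface — cut out by a complementary region of $\tau$, or by $\partial N(\sigma)$ for a subtrack $\sigma$ carrying $c(v_1)$ — with enough room to contain an essential curve $x$; the same weight-$\le 2$ constraint on $c(v_2)$ then confines $c(v_2)$ relative to $x$ enough to keep $x\cup c(v_2)$ from filling. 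Carrying this out is the main obstacle: one must handle punctured complementary regions, the interaction of $c(v_1)$ and $c(v_2)$ inside a single polygon, and the subcase in which the support subtrack of $c(v_1)$ is itself filling. In the few residual configurations one can instead close the gap using the intersection-number-to-distance bounds recalled in Section~3.
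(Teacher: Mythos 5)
The paper does not actually prove this statement; it quotes it from \cite{Aoug}, where it is established by precisely the route you dismiss in your opening paragraph, and that dismissal is the root of the difficulty. It is true that a bound of the shape $d_{\mathcal{C}}(\alpha,\beta)\le F(i(\alpha,\beta))$ with $F$ independent of the surface (e.g.\ the classical $d\le 2\log_2 i+2$) cannot give an absolute constant, since $i(c(v_1),c(v_2))$ can grow linearly in $\omega$. But Theorem 3.2 is a \emph{complexity-dependent} estimate: if $d_{\mathcal{C}}(\alpha,\beta)\ge 4$ then $i(\alpha,\beta)\ge (\omega^{\lambda}/q)^{2}$ with $q=O(\log\omega)$, and for $\lambda>1/2$ this grows super-linearly in $\omega$. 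On the other hand, two vertex cycles of the same generic track put weight at most $2$ on each branch (Lemma 5.2), the track has $O(\omega)$ branches and switches (Lemma 4.5), and their mutual intersections can be pushed to occur only at switches, so $i(c(v_1),c(v_2))=O(\omega)$. For $\omega$ beyond an explicit threshold $n$ these two bounds are incompatible with $d\ge 4$, which is the entire proof. The threshold $n$ in the statement exists exactly to absorb the point at which the super-linear lower bound overtakes the linear upper bound.

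Your alternative argument has a genuine gap at its central step. The non-filling lemma and the reduction to ``find $x$ disjoint from $c(v_1)$ with $x\cup c(v_2)$ non-filling'' are fine, but that reduction is logically equivalent to the conclusion $d\le 3$, so all of the content lies in producing $x$, and the counting argument sketched does not produce it. The assertion that $c(v_1)$ ``meets all but a bounded number of the complementary regions in a boundary-parallel way'' does not follow from the weight-$\le 2$ bound: a vertex cycle of a large track can have positive weight on essentially every branch, so its support need not avoid or simplify near any complementary polygon. And even granting a subsurface containing an essential curve $x$ disjoint from $c(v_1)$, you give no mechanism forcing the \emph{other} vertex cycle $c(v_2)$ to fail to fill with that particular $x$; the two vertex cycles are not related in any way your construction exploits. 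You acknowledge this is ``the main obstacle'' and defer the ``residual configurations'' to the intersection-number bounds of Section 3 --- but those configurations are never delimited, and that fallback is the method you set aside at the outset. As written, the proof is not complete; the efficient fix is to run the Theorem 3.2 argument globally rather than as a patch.
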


Let $int(P(\tau))\subset P(\tau)$ denote the set of strictly positive transverse measures on $\tau$. There $\tau$ is recurrent if and only if $int(P(\tau))\neq \emptyset$. For $\tau$ a large track, a \textit{diagonal extension} $\sigma$ of $\tau$ is a track such that $\tau<\sigma$ and and each branch of $\sigma\setminus \tau$ has the property that its endpoints are incident at corners of complementary regions of $\tau$. 

Following Masur and Minsky \cite{Mas-Mur}, let $E(\tau)$ denote the set of all diagonal extensions of $\tau$, and define 
\[ PE(\tau):= \bigcup_{\sigma \in E(\tau)}P(\sigma). \]

Let $N(\tau)$ be the union of $E(\kappa)$ over all large, recurrent subtracks $\kappa<\tau$:

\[ N(\tau):= \bigcup_{\kappa<\tau, \kappa \hspace{1 mm} large, recurrent}E(\kappa), \]

and define
\[ PN(\tau):= \bigcup_{\kappa\in N(\tau)}P(\kappa) \]

Define $int(PE(\tau))$ to be the measures in $PE(\tau)$ whose restrictions to $\tau$ are strictly positive, and define 
\[ int(PN(\tau)):= \bigcup_{\kappa} int(PE(\kappa)). \]

We conclude this section with the statement of a previous result of the author \cite{Aoug} which will be heavily relied upon in section $3$. 

\begin{theorem} For $\lambda \in (0,1)$, there is some $N=N(\lambda)$ such that if $\alpha,\beta\in \mathcal{C}_{0}(S_{g})$, whenever $\omega(g,p)>N(\lambda)$ and $d_{\mathcal{C}}(\alpha,\beta)\geq k$, 
\[ i(\alpha,\beta) \geq \left(\frac{(\omega(g,p)^{\lambda}}{q(g,p)}\right)^{k-2} \]
where $q(g,p)=O(\log_{2}(\omega))$. 
\end{theorem}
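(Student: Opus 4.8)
The plan is to prove the (flexible) statement by building an efficient edge-path in the curve graph and reading the intersection bound off its length. Everything reduces to a \emph{surgery lemma}: for $\lambda\in(0,1)$ there is $N_0(\lambda)$ so that if $\omega(g,p)>N_0(\lambda)$ and $x,y$ are curves on $S_{g,p}$ in minimal position with $i(x,y)=n>0$, then there is a curve $\mu$ disjoint from $y$ with
\[
i(x,\mu)\ \le\ \frac{q(g,p)}{\omega(g,p)^{\lambda}}\; n ,\qquad q(g,p)=O(\log_2\omega).
\]
Granting this, iterate: put $\mu_0=\alpha$ and, having produced $\mu_j$, apply the lemma with $x=\beta$, $y=\mu_j$ to get $\mu_{j+1}$ disjoint from $\mu_j$ with $i(\beta,\mu_{j+1})\le (q/\omega^{\lambda})\,i(\beta,\mu_j)$, so $i(\beta,\mu_j)\le (q/\omega^{\lambda})^{\,j}\,i(\alpha,\beta)$. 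As soon as $(q/\omega^{\lambda})^{\,j}\,i(\alpha,\beta)<1$ we must have $i(\beta,\mu_j)=0$, and then $\alpha=\mu_0,\mu_1,\dots,\mu_j,\beta$ is an edge-path, so $d_{\mathcal{C}}(\alpha,\beta)\le j+1$. Taking $j$ minimal with that property gives $d_{\mathcal{C}}(\alpha,\beta)\le \log_{\omega^{\lambda}/q}i(\alpha,\beta)+2$, which rearranges to $i(\alpha,\beta)\ge(\omega^{\lambda}/q)^{\,d_{\mathcal{C}}(\alpha,\beta)-2}\ge(\omega^{\lambda}/q)^{\,k-2}$ whenever $d_{\mathcal{C}}(\alpha,\beta)\ge k$, using that $\omega^{\lambda}/q>1$ for $\omega$ large. (The hypothesis $d_{\mathcal{C}}(\alpha,\beta)\ge k\ge 3$ makes the exponent $k-2\ge 1$ meaningful; the cases $k\le 2$ are vacuous.)

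For the surgery lemma, cut $S$ along $y$: the curve $x$ becomes $n$ pairwise disjoint essential arcs in $S'=S\setminus y$, a surface whose complexity differs from $\omega$ by a bounded amount (and which may be disconnected if $y$ separates, in which case one works in the piece carrying the most arcs). Any family of pairwise disjoint, pairwise non-parallel essential arcs in $S'$ has size $O(\omega)$, since it extends to an ideal triangulation; hence by pigeonhole a maximal parallel family $C$ among the $n$ arcs has $|C|\ge n/O(\omega)$ members. These cobound an embedded rectangle $R$ whose two horizontal sides are sub-arcs of $\partial S'$ each carrying exactly $|C|$ consecutive points of $x\cap y$. The elementary band surgery of $y$ along $R$ already yields a curve disjoint from $y$ whose intersection with $x$ is at most $n/2$; the real content is to do much better. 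The idea is that high complexity provides enough room to iterate such surgeries \emph{inside} $S'$, so that every curve produced remains disjoint from $y$ — hence only one unit of distance is spent overall — while the number of intersections with $x$ is driven down by a full factor of $\omega^{\lambda}$ rather than $2$; the $O(\log_2\omega)$ iterations needed are precisely what is recorded by the slack term $q=O(\log_2\omega)$. One also checks that if a surgered curve is inessential it may be swapped for an essential curve disjoint from it, at a bounded cost absorbed into the constants.

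The main obstacle is exactly this last improvement: upgrading the ``factor $2$'' band surgery to one that gains a factor growing with the complexity of $S$ without increasing the distance to $y$. This is where the threshold $\omega(g,p)>N(\lambda)$ is unavoidable and where the quantitative form $q=O(\log_2\omega)$ of the loss must be extracted — balancing how many nested surgeries one can perform against how much room each consumes. As a sanity check on the whole scheme, the base case $k=3$ is the statement that if $x\cup y$ fills $S$ then $i(x,y)$ is bounded below, and a Euler-characteristic count on the square complex $x\cup y$ already forces $i(x,y)=\Omega(\omega)\ge \omega^{\lambda}/q$ for $\omega$ large; the general case is the inductive propagation of this estimate along the path built above.
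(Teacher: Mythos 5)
The statement you are proving is Theorem 3.2 of the paper, which is not proved there at all: it is quoted from the author's earlier work \cite{Aoug}, and the only proof-relevant information the present paper records is that $q(g,p)$ arises as a girth bound for graphs with $O(\omega)$ vertices and average degree exceeding $2.02$, via Fiorini--Joret--Theis--Wood. Your outer scheme is fine and standard: granting the surgery lemma, the iteration $\mu_0=\alpha,\mu_1,\dots$ does yield $d_{\mathcal{C}}(\alpha,\beta)\le \log_{\omega^{\lambda}/q} i(\alpha,\beta)+2$, which inverts to the stated bound (and is essentially the contrapositive of the induction on $k$ used in \cite{Aoug}).

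The genuine gap is the surgery lemma itself, which is the entire content of the theorem, and which you assert rather than prove. Classical band surgery along a maximal parallel family of arcs gives a curve disjoint from $y$ with intersection at most $n/2$ against $x$; to get the factor $\omega^{\lambda}/q$ you propose to ``iterate such surgeries inside $S'$'' $O(\log_2\omega)$ times. But this does not work as described: after the first surgery the new curve is disjoint from $y$, and a second surgery (on the new curve, along arcs of $x$) produces a curve disjoint from the \emph{new} curve, with no reason to remain disjoint from $y$; so each further factor of $2$ costs another unit of distance, and you recover only Hempel's bound $d\le 2\log_2 i+O(1)$, not the theorem. You correctly flag this upgrade as ``the main obstacle,'' which is an admission that the key step is missing. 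Moreover your heuristic for the slack term ($q$ counts the number of iterated surgeries) does not match its actual origin: in \cite{Aoug} one builds an auxiliary graph from the parallelism classes of arcs of $x$ in $S\setminus y$, applies the Fiorini--Joret--Theis--Wood girth bound to find a cycle of length at most $q=O(\log_2\omega)$, and reads off from that short cycle an essential curve disjoint from $y$ meeting $x$ at most $(q/\omega^{\lambda})\,n$ times; the logarithm is the length of that cycle, not an iteration count. Without this (or an equivalent) mechanism, the proposal does not establish the theorem.
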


\begin{remark} In the above, $i(\alpha,\beta)$ is the \textit{geometric intersection number} between $\alpha$ and $\beta$, defined by 
\[ i(\alpha,\beta):= \min |x\cap \beta| \]
where the minimum is taken over all $x$ isotopic to $\alpha$. 
\end{remark}

We can explicitly write down the function $q(g,p)$ from the statement of Theorem $3.2$. $q(g,p)$ is an upper bound on the girth of a finite graph with at most $8(6g+3p-7)$ vertices, and average degree larger than $2.02$. As seen in Fiorini-Joret-Theis-Wood \cite{Fio-Jor-The-Woo},  
\[ q(g,p)= \left(\frac{8}{\log_{2}(1.01)}+5\right)\log_{2}(8(6g+3p-7))  \]
\[ < 1000\cdot \log_{2}(100\omega). \]
This upper bound will be used in section $5$.

\section{Detecting Recurrence from the Incidence Matrix }
Let $\tau=(\mathcal{S},\mathcal{B})\subset S_{g,p}$ be a train track with branch set $\mathcal{B}$ and switch set $\mathcal{S}$.

Label the branches $\mathcal{B}=\left\{b_{1},...,b_{n}\right\}$ and switches $\mathcal{S}=\left\{s_{1},...,s_{m}\right\}$, and identify $\mathbb{R}^{n}$ with real-valued functions over $\mathcal{B}$. Then associated to $\tau$ is a linear map $L_{\tau}:\mathbb{R}^{n}\rightarrow \mathbb{R}^{m}$, and a corresponding matrix in the standard basis defined by, given $u\in \mathbb{R}^{n}$, the $j^{th}$ coordinate of $L_{\tau}(u)$ is the sum of the incoming weights, minus the sum of the outgoing weights at the $j^{th}$ switch, $1\leq j \leq m$. Let $\mathbb{R}^{n}_{+}$ denote the strictly positive orthant of $\mathbb{R}^{n}$, the collection of vectors with all positive coordinates. 

We call $L_{\tau}$ the \textit{incidence matrix} for $\tau$. Note that $\mu\in \mathbb{R}^{n}$ is a transverse measure on $\tau$ if and only if $\mu \in \ker(L_{\tau})$; thus, $\tau$ is recurrent if $\ker(L_{\tau})$ intersects $\mathbb{R}^{n}_{+}$ non-trivially. 

 As mentioned in the proof of Lemma $4.1$ of \cite{Mas-Mur},  if $\ker(L_{\tau})\cap \mathbb{R}^{n}_{+}= \emptyset$, then there is some $\delta>0$ such that  

\[ \|L_{\tau}(u)\| \geq \delta \cdot u_{min} , \hspace{1 mm} \forall \hspace{1 mm} u \in \mathbb{R}^{n}_{+}.\]

Here, $u_{min}$ is the minimum over all coordinates of the vector $u$, and $\|\cdot\|$ is the standard Euclidean norm in $\mathbb{R}^{m}$. The main goal of this section is to effectivize this statement, that is, to obtain explicit control on the size of $\delta$ as a function of $g$ and $p$: 

\begin{theorem} Let $\tau=(\mathcal{S},\mathcal{B}), |\mathcal{B}|=n, |\mathcal{S}|=m$ be a non-recurrent train track on $S_{g,p}$, and let $u\in \mathbb{R}^{n}_{+}$. Then 

\[ \|L_{\tau}(u)\|_{sup} \geq \frac{u_{min}}{12g+4p-12}, \]
where $\|\cdot\|_{sup}$ is the sup norm on $\mathbb{R}^{m}$. 

\end{theorem}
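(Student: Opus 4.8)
The plan is to make the compactness argument from Masur--Minsky quantitative by replacing the abstract "there exists $\delta>0$" with an explicit linear-programming style estimate. First I would reduce to a normalized situation: since the inequality $\|L_\tau(u)\|_{\sup} \geq u_{\min}/(12g+4p-12)$ is homogeneous of degree one in $u$, it suffices to prove it for $u \in \mathbb{R}^n_+$ with $u_{\min} = 1$, i.e. to show that if every coordinate of $u$ is at least $1$ then some switch has $|{\sum_{b \in i(v)} u(b) - \sum_{b' \in o(v)} u(b')}| \geq 1/(12g+4p-12)$. Equivalently, arguing by contradiction, I would assume $\|L_\tau(u)\|_{\sup} < 1/(12g+4p-12)$ for some $u$ with $u_{\min} \geq 1$ and produce from it a strictly positive vector in $\ker(L_\tau)$, contradicting non-recurrence.

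The key step is a rounding/averaging argument on the switch defects. Set $N = 12g+4p-12$; note this is (a convenient bound on) the number of branches of a generic train track on $S_{g,p}$, since $n \leq 6g+2p-6$ and $2n \le 12g+4p-12$ — I will want to pin down which combinatorial count $12g+4p-12$ really is (number of branches, or twice it, or number of switch-sides) and cite the Euler-characteristic bound on $\tau$ from Section 3 for it. Given $u$ with small defect at every switch, I would try to perturb $u$ to an exact solution of the switch equations while keeping all coordinates positive: the defects $L_\tau(u) \in \mathbb{R}^m$ are small, and because $L_\tau$ restricted to the span of $\ker(L_\tau)^\perp$ (or better, because $\mathbb{R}^n_+$ meets $\ker L_\tau$ nontrivially precisely when $\tau$ is recurrent) one wants to absorb the defect by a correction vector of sup-norm $< 1 \le u_{\min}$. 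The cleanest route: if $\ker(L_\tau)\cap \mathbb R^n_{+}=\emptyset$ then there is a subtrack phenomenon — by an extremal argument over the branches one can find a single branch (or a short combinatorial cycle of branches) along which the accumulated defect forces $\|L_\tau(u)\|_{\sup}$ to be at least $u_{\min}$ divided by the length of that cycle, and that length is at most the number of branches, hence at most $N$. I would run this as: orient branches along incoming/outgoing germs, follow the measure imbalance, and telescope.

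Concretely, the step I expect to carry the argument is: \emph{from non-recurrence, extract a combinatorial certificate of bounded size.} The natural certificate is a Farkas-type separating functional — non-recurrence means $0 \notin L_\tau(\Delta)$ where $\Delta = \{u : u_{\min} \ge 1\}$ is a closed convex set, so by separation there is a covector $\xi \in \mathbb{R}^m$ with $\langle \xi, L_\tau(u)\rangle \geq$ (something positive) for all such $u$, i.e. $L_\tau^T \xi$ is a nonzero vector in $\mathbb{R}^n$ all of whose coordinates are $\geq$ some $c>0$ up to the $u_{\min}$ normalization. The content is bounding the "complexity" of $\xi$: one takes $\xi$ to be a vertex of the relevant polyhedron, and $L_\tau^T\xi$ then has entries that are small-denominator combinations of the $\pm 1$ entries of $L_\tau$, so one gets $\langle \xi, L_\tau(u)\rangle \ge u_{\min}$ while $\|\xi\|_1 \le N$, giving $\|L_\tau(u)\|_{\sup} \ge u_{\min}/\|\xi\|_1 \ge u_{\min}/N$ by Hölder.

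The main obstacle, and where I would spend the most care, is making the bound on $\|\xi\|_1$ (equivalently, the length of the extremal defect-cycle, equivalently the denominators appearing in a vertex of the dual polyhedron) genuinely clean and equal to $12g+4p-12$ rather than some messier polynomial in $g,p$. This requires exploiting that $L_\tau$ is a \emph{very} structured matrix: each column has exactly two nonzero entries, a $+1$ and a $-1$ (each branch is incoming at one switch-side and outgoing at another), so $L_\tau$ is (up to signs) the incidence matrix of a graph and is \emph{totally unimodular}. Total unimodularity is the crucial input: it forces every vertex of every associated polyhedron to be integral (or half-integral after accounting for the switch of semi-generic vertices), which is exactly what keeps $\|\xi\|$ controlled by the size of the graph — here $\le m$ switches, hence the bound in terms of $g,p$ via $2-2g-p = \chi(S_{g,p})$ and the generalized-Euler-characteristic condition. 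So the real plan is: (1) homogenize to $u_{\min}=1$; (2) observe $L_\tau$ is totally unimodular since it is a signed graph incidence matrix; (3) apply a Farkas/separation argument whose dual optimum is a vertex, hence integral by TU, with $\ell_1$-norm bounded by the number of switches-or-branches; (4) conclude via Hölder, with the arithmetic of $\chi(S_{g,p})$ supplying the constant $12g+4p-12$.
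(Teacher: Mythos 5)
Your high-level strategy --- produce a dual (Farkas/Gordan) certificate $\xi\in\mathbb{R}^{m}$ with $L_{\tau}^{T}\xi$ a nonzero nonnegative integer vector, then conclude $\|L_{\tau}(u)\|_{\sup}\geq u_{\min}/\|\xi\|_{1}$ by H\"older --- is exactly the shape of the paper's argument, and the constant $12g+4p-12$ is indeed the bound on the number of \emph{switches} (Lemma 4.5), so what is needed is a certificate with entries in $\{0,\pm 1\}$. But the mechanism you propose for bounding $\|\xi\|_{1}$, total unimodularity, breaks down, and this is a genuine gap. The columns of $L_{\tau}$ are not signed incidence columns of a directed graph: the incoming/outgoing partition is chosen independently at each switch, so a branch can perfectly well lie in the incoming germ at \emph{both} of its endpoints, giving a column $(+1,+1)$ rather than $(+1,-1)$; moreover a branch with both ends at one switch contributes a $\pm 2$ entry (the paper notes this explicitly). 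Thus $L_{\tau}$ is the incidence matrix of a \emph{bidirected} graph, and such matrices are not totally unimodular --- an odd cycle of all-incoming branches already has a $3\times 3$ minor equal to $\pm 2$, and in general the subdeterminants are powers of $2$ that can grow with the size of the track. Consequently the vertex of your dual polyhedron need not be integral with small entries, and the $\ell_{1}$-bound $\|\xi\|_{1}\leq m$ does not follow; a naive bound along these lines would be exponential in $\omega$, which would propagate into $R(g,p)$ in Lemma 5.6 and spoil the quantitative conclusion.

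What the paper does instead is construct the $0$--$1$ certificate by hand from the train-track combinatorics, which is precisely the content your LP abstraction discards. Non-recurrence yields an ``invisible'' branch $b$ (assigned $0$ by every transverse measure), and such a branch lies on no closed train path. One then looks at the set $\mathcal{Q}$ of branches reachable by train paths from an endpoint $s_{1}$ of $b$ that avoid $b$; a dumbbell argument shows that for at least one endpoint this path set is \emph{unidirectional}, so after harmlessly re-designating incoming versus outgoing germs (which only negates rows of $L_{\tau}$) every branch of $\mathcal{Q}$ contributes $+1$ at one of its switches and $-1$ at the other. Taking $\xi$ to be the indicator of the switches met by $\mathcal{Q}$, the contributions of branches of $\mathcal{Q}$ cancel in $V=L_{\tau}^{T}\xi$, the ``frontier'' branches (including $b$ itself) are forced to be incoming and contribute positively, and so $V$ is a nonzero nonnegative integer vector with $\|\xi\|_{1}\leq|\mathcal{S}|\leq 12g+4p-12$; the H\"older step then finishes as you intended. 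If you want to salvage your route, you would need to prove directly that the cone $\{\xi: L_{\tau}^{T}\xi\geq 0\}$ always contains a $\{0,\pm 1\}$-vector off the face $L_{\tau}^{T}\xi=0$ whenever it is nonempty --- and the reachable-set construction above is, in effect, that proof.
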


\begin{proof} 

We begin  by observing that non-recurrence is equivalent to the existence of ``extra'' branches, ones that must be assigned $0$ by any transverse measure:

\begin{lemma} Suppose that for each branch $b\in \mathcal{B}$, there is some corresponding transverse measure $\mu_{b}$ on $\tau$  such that $\mu(b)>0$. Then $\tau$ is recurrent. $\Box$

\end{lemma}

Therefore, the existence of a branch $b$ which is assigned $0$ by every transverse measure on $\tau$ is equivalent to $\tau$ being non-recurrent. We will call such a branch \textit{invisible}. 

Given $s\in \mathcal{S}$, the switch condition at $s$ represents a row vector of the matrix corresponding to the linear transformation $L_{\tau}$. This is the vector $v_{s}$ that has $1$'s in the coordinates corresponding to the incoming half branches incident to $s$, and $-1$'s in the coordinates corresponding outgoing half branches incident to $s$. Note that $v_{s}$ could also have a $\pm 2$ in place of two $1$'s, if both ends of a single branch are incident to $s$. Let $R(L_{\tau})$ denote the row space of $L_{\tau}$, the vector space spanned by the row vectors.

The following is an immediate corollary of Lemma $4.1$:

\begin{lemma} Suppose $b\in \mathcal{B}$ is an invisible branch. Then $b$ is not contained in a closed train path. $\Box$
\end{lemma} 

For $b$ a branch of $\tau$, Let $S(b) \subset \mathcal{S}$ denote the switches of $\tau$ incident to $b$; thus $|S(b)|=1$ or $2$. For $x \in S(b)$, consider the pointed universal cover $(\tilde{\tau},\tilde{x})$ with associated covering projection $\pi: (\tilde{\tau}, \tilde{x})\rightarrow (\tau, x)$.  We define $\mathcal{P}(\tilde{\tau},\tilde{x})\subseteq \tilde{\tau}$ to be the set of train paths in $\tilde{\tau}$ emanating from $\tilde{x}$ that do not traverse any branch which projects to $b$ under $\pi$.  

 Let $\tilde{\mathcal{P}} \subseteq \tilde{\tau}$ be the subset of the universal cover consisting of points contained in some train path of $\mathcal{P}(\tilde{\tau},\tilde{x})$. Any train path emanating from $\tilde{x}$ has a natural choice of orientation, by defining its initial point to be $\tilde{x}$. This induces an orientation on any branch $e$ contained in $\tilde{\mathcal{P}}$. Note that this is well-defined because $\tilde{\tau}$ does not contain closed train paths (proper or otherwise). 

We say that $\mathcal{P}(\tilde{\tau},\tilde{x})$ is \textit{unidirectional} if, whenever $e_{i},e_{j}\subseteq \tilde{\mathcal{P}}$ project to the same branch $e$ of $\tau$, the orientations of $e$ induced by $e_{i}$ and $e_{j}$ agree.

 Given $u\in \mathbb{R}^{n}$,  define the \textit{deviation} of $u$ at $s\in \mathcal{S}$, denoted by $d_{s}(u)$, to be the absolute value of the coordinate of $L_{\tau}(u)$ corresponding to $s$. It suffices to assume that, for $u$ as in the statement of the theorem, 

\begin{equation}
d_{s}(u)< \frac{u_{min}}{12g+4p-12}, \hspace{1 mm} \forall \hspace{1 mm} s\in \mathcal{S}
\end{equation}

We will use this assumption to obtain a contradiction. 

Since $\tau$ is non-recurrent, it must contain an invisible branch $b$. 

\begin{lemma} Let $s_{1},s_{2} \in S(b)$ be the two (possibly non-distinct) switches incident to the invisible branch $b$, $\tilde{s}_{1},\tilde{s}_{2} \in \tilde{\tau}$ corresponding lifts. Then at least one of $\mathcal{P}(\tilde{\tau},\tilde{s}_{i}), i=1,2$ is unidirectional. 
\end{lemma}

\begin{proof} Suppose not. Then there exist branches $(e^{i}_{j})_{i=1,2}^{j=1,2}\in \tilde{\mathcal{P}}$ such that $e^{i}_{1}, i=1,2$ project to a branch $e_{1}$ of $\tau$ with opposite orientations, and similarly for $e^{i}_{2},i=1,2$. Thus, in $\tau$ there exist two train paths starting from $s_{1}$ and ending at $e_{1}$, but which traverse $e_{1}$ in opposite directions. Concatenating these two paths produces a loop in $\tau$, which is a train path away from $s_{1}$. 

By the same exact argument, there is another loop containing the switch $s_{2}$ and the branch $e_{2}$, which is a train path away from $s_{2}$. We can then concatenate these two paths across the branch $b$ to obtain a ``dumb-bell'' shaped closed train path, which contains $b$ (see Figure $4$). This contradicts Lemma $4.2$. 
\end{proof}

\begin{figure}[H]
\centering
	\includegraphics[width=3.5in]{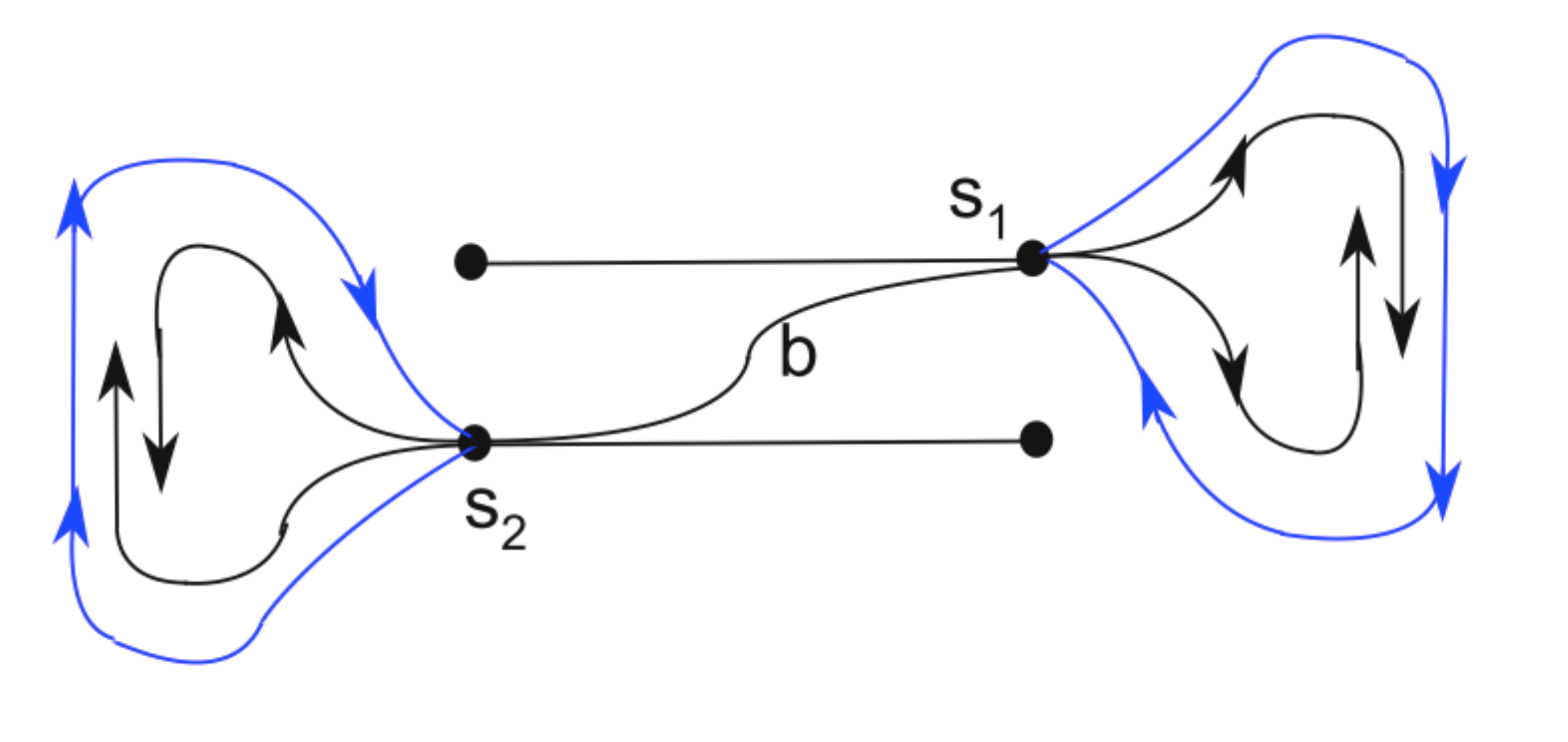}
\caption{ If neither train path set emanating from $b$ is unidirectional, then there exist non-closed train paths starting and ending at $s_{1}$ and $s_{2}$. Joining these paths across $b$ yields a closed train path containing $b$, outlined in blue above. }
\end{figure}

Therefore, we assume henceforth that $\mathcal{P}(\tilde{\tau},\tilde{s}_{1})$ is unidirectional; let $\mathcal{Q}(s_{1}) \subseteq \tau$ be the projection of $\tilde{\mathcal{P}}$ to $\tau$. That $\mathcal{P}$ is unidirectional will allow us to redefine which half branches are incoming and which are outgoing (without changing the linear algebraic structure of $L_{\tau}$) such that each branch of $\mathcal{Q}$ is mixed. 

More concretely, orient each edge $e \subseteq \mathcal{Q}(s_{1})$ by projecting the orientation on $\tilde{e}$ down to $e$, where $\tilde{e} \subseteq \tilde{\mathcal{P}}$ is any branch of $\tilde{\tau}$ with $\pi(\tilde{e})= e$; unidirectionality implies that this construction is well-defined. Then we simply define a half-branch $e'\subset e \in \mathcal{Q}$ to be outgoing at a switch $s$ if the orientation of $e'$ coming from $e$ points away from $s$, and similarly for incoming branches. Note that this is well-defined, in that two half-branches incident to the same switch in distinct germs will be assigned opposing directional classes. 

This rule then defines an assignment of direction for all half branches of $\tau$ as follows. The half branches of $\tau$ which are not contained in $\mathcal{Q}$ can be partitioned disjointly into two subcollections: the \textit{frontier} half branches (those which are incident to a switch contained in $\mathcal{Q}$), and the $\textit{interior}$ half branches (those for which the incident switch is not contained in $\mathcal{Q}$). Once directions have been assigned to the half branches of $\mathcal{Q}$ as above, directions for frontier half branches are determined by which germ they belong to at the corresponding switch. For interior half branches, simply assign the original directions coming from $\tau$. 

Let $S(\mathcal{Q})\subseteq \mathcal{S}$ denote the switches of $\tau$ contained in $\mathcal{Q}$, and recall that $v_{s}$ denotes the row vector of $L_{\tau}$ corresponding to the switch $s\in \mathcal{S}$.

\begin{lemma} The vector $V=\sum_{s\in S(\mathcal{Q})} v_{s} \in R(L_{\tau})$  is a non-zero integer vector, all of whose coordinates are non-negative. 
\end{lemma}
\begin{proof} 
Since every branch of $\mathcal{Q}$ is mixed, each component of $V$ corresponding to a branch of $\mathcal{Q}$ is $0$. The same is true for any branch not in $\mathcal{Q}$ which does not contain a frontier half-branch.

We claim that frontier half branches must be incoming at the switch contained in $S(\mathcal{Q})$ to which it is incident; this will imply that $V$ takes on a positive value for each component corresponding to a branch containing a frontier half branch. 

Indeed, let $e$ be a branch containing a frontier half branch, and let $s \in S(\mathcal{Q})$ be incident to $e$. $s\in S( \mathcal{Q} )$ implies that there is another branch $e'$ incident to $s$ such that $e'$ is a branch of $\mathcal{Q}$ and $e'$ is incoming at $s$. Thus if $e$ were outgoing at $s$, there would exist a train path emanating from $s_{1}$ which traverses $e$, by contatenating the train path starting at $s_{1}$ and ending at $e'$ with the train path connecting $e'$ to $e$ over $s$. This contradicts the assumption that $b \notin \mathcal{Q}$. 

Thus to complete the argument it suffices to show that the collection of frontier half branches is non-empty. Recall that $b$ is an invisible branch, and is therefore not contained in any closed train path. It then follows that the half branch of $b$ incident to $s_{1}$ is frontier.

\end{proof}

 We now use the following elementary fact regarding train tracks on $S_{g,p}$, (see \cite{Pen-Har} for proof):

\begin{lemma} Let $\tau\subset S_{g},\tau=(\mathcal{B},\mathcal{S})$ be a train track. Then 
\[ |\mathcal{B}|\leq 18g+6p-18;\]
\[ |\mathcal{S}|\leq 12g+4p-12. \]

\end{lemma}

Therefore, there are at most $12g+4p-12$ row vectors of $L_{\tau}$ in the sum $V$. Furthermore, since the components of $V$ are all non-negative integers, 
\[ |V \cdot u | \geq u_{min}, \]

where $\cdot$ denotes the standard Euclidean dot product. On the other hand, assuming the validity of $(4.1)$, one obtains

\[ |V \cdot u| = \left| \sum_{s\in S(\mathbb{Q})}\textbf{v}_{s} \cdot u \right|  \leq \sum_{s\in S(\mathbb{Q})} \left |\textbf{v}_{s} \cdot u \right | \]
\[  = \sum_{s \in S(\mathbb{Q})} d_{s}(u) < (12g+4p-12) \cdot \frac{u_{min}}{12g+4g-12}= u_{min}, \]

a contradiction.

\end{proof}

\section{An effective Nesting Lemma}
In this section, we will use Theorems $3.2$ and $4.3$ to establish the following effective version of Masur and Minsky's \cite{Mas-Mur} nesting lemma:

\begin{lemma}  There exists a function $k(g,p)= O(\omega^{2})$ such that if $\sigma$ and $\tau$ are large train tracks and $\sigma$ is carried by $\tau$, and $d(\tau,\sigma)>k(g,p)$, then 
\[ PN(\sigma)\subset int(PN(\tau)). \]

\end{lemma}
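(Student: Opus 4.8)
The plan is to follow Masur and Minsky's original argument for the nesting lemma while inserting the quantitative input from Theorems~$3.2$ and $4.3$. Recall that the statement $PN(\sigma) \subset \operatorname{int}(PN(\tau))$ unpacks as: for every large recurrent subtrack $\kappa < \sigma$ and every diagonal extension $\eta \in E(\kappa)$, every measure $\mu \in P(\eta)$ lies in $\operatorname{int}(PE(\kappa'))$ for some large recurrent subtrack $\kappa' < \tau$; equivalently, the lamination $\lambda$ carried by $\eta$ is carried by $\tau$ in such a way that it fills out (puts positive weight on every branch of) a large recurrent subtrack of $\tau$. The contrapositive is the efficient route: assume $\lambda$ is carried by $\tau$ but does \emph{not} lie in $\operatorname{int}(PN(\tau))$, and conclude that $d(\tau,\sigma)$ is small. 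Non-membership in $\operatorname{int}(PN(\tau))$ means that when we pull $\lambda$ back to a transverse measure $u$ on $\tau$ via the carrying map, the sub-track $\tau_0 < \tau$ of branches receiving positive $u$-weight is \emph{not} a large recurrent subtrack of $\tau$ — so either $\tau_0$ fails to be large, or $\tau_0$ fails to be recurrent, or it has a complementary region obstruction. In each case I want to produce a curve $\gamma$ (either a vertex cycle of $\tau$, or a curve in a complementary region, or the curve $\alpha(b)$ associated to an invisible branch) whose intersection number with a vertex cycle of $\sigma$ is controlled, and then invoke Theorem~$3.2$ to turn a bounded intersection number into a bounded curve-graph distance.

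The key steps, in order: (1) Set up the carrying map $\phi : S \to S$ realizing $\sigma \prec \tau$ and the induced nonnegative integral (after clearing denominators, but we work with the extreme rays, i.e.\ vertex cycles) weight vector $u$ on $\tau$ coming from a vertex cycle $v$ of $\sigma$; let $\tau_0 \le \tau$ be the support subtrack. (2) The ``largeness'' part: if $\tau_0$ is not large, a complementary region of $\tau_0$ is non-simply-connected, which gives an essential curve $\gamma$ disjoint from $\tau_0$ and hence, via the carrying map, a curve of controlled intersection with $v$; largeness of $\sigma$ and $\tau$ bounds this. (3) The ``recurrence'' part, which is where Theorem~$4.3$ enters: if $\tau_0$ is large but not recurrent, then by Theorem~$4.3$ applied to $\tau_0$ with the weight vector $u$ restricted to its support, we get $\|L_{\tau_0}(u)\|_{\sup} \ge u_{\min}/(12g+4p-12)$; but $u$ comes from an honest transverse measure (a curve carried by $\sigma \prec \tau$), so the deviations $d_s(u)$ at switches of $\tau$ vanish — the only nonzero deviations for $\tau_0$ come from branches of $\tau \setminus \tau_0$ incident to switches of $\tau_0$, i.e.\ branches $\phi$ maps $v$ across zero times. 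This forces $u_{\min} \le (12g+4p-12) \cdot (\text{contribution from such branches})$, and chasing this inequality bounds the ratio $u_{\max}/u_{\min}$, equivalently bounds how many times $v$ wraps around $\tau$ relative to the thinnest branch — which bounds $i(v_\sigma, v_\tau)$ for appropriate vertex cycles. (4) Convert the intersection-number bound $i(\gamma, v_\tau) \le$ (polynomial in $\omega$) into $d_{\mathcal C}(\gamma, v_\tau) \le$ (logarithm of that) $= O(\log \omega)$ via Theorem~$3.2$ — wait, we need the other direction. Actually Theorem~$3.2$ says large distance forces large intersection number, so bounded intersection number forces bounded distance: if $i(\alpha,\beta) < (\omega^\lambda/q)^{k-2}$ then $d_{\mathcal C}(\alpha,\beta) < k$; solving, a polynomial bound $i \le \omega^C$ gives $d_{\mathcal C} \le 2 + C\log\omega / (\lambda \log \omega - \log q) = O(1)$ once $\omega$ is large (since $q = O(\log\omega)$), so in fact each of these obstructions yields a \emph{constant} bound on the relevant distance. (5) Finally relate $d(\tau,\sigma)$ to these: $d(\tau,\sigma)$ is the min over vertex cycles, and by Theorem~$3.1$ all vertex cycles of a single track are within $3$ of each other, so it suffices to bound $d(v_\tau, v_\sigma)$ for \emph{one} well-chosen pair, which steps (2)–(4) do. Track the accumulated constants; the $O(\omega^2)$ in the conclusion comes from the fact that the intersection-number bounds in steps (2)–(3) are themselves polynomial of degree roughly $2$ in $\omega$ (the branch and switch counts of Lemma~$4.6$ are linear in $\omega$, and they enter quadratically when bounding how the weight vector can be distributed), and then $\log(\omega^2) = O(\log\omega)$ feeds into Theorem~$3.2$ — but actually the cleanest bookkeeping gives $k(g,p) = O(\omega^2)$ directly from the combinatorics of how many splitting-type moves separate $\tau_0$ from $\tau$.

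The main obstacle will be step (3): making the passage from ``$\tau_0$ is non-recurrent'' to ``$d(\tau,\sigma)$ is small'' genuinely quantitative. In Masur–Minsky this is a compactness/continuity argument (the measure polyhedron is compact, the relevant sets are closed, so non-containment is an open condition detected at finite combinatorial depth), which gives \emph{a} constant but no control. Replacing compactness by the explicit inequality of Theorem~$4.3$ requires carefully identifying the ``deviation'' of the pulled-back weight vector $u$ with something geometric — namely the number of strands of the carried curve $v$ running along branches of $\tau$ that $\tau_0$ omits — and then arguing that if this deviation is small relative to $u_{\min}$, the curve $v$ is essentially carried by $\tau_0$ itself, so $\tau_0$ would have to be recurrent after all; the contradiction forces $u_{\min}$ small relative to $u_{\max} = i$-type data, bounding the complexity of $v$ as a curve carried by $\tau$ and hence $i(v_\sigma, v_\tau)$. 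Getting the dependence on $\omega$ right here — ensuring it's polynomial and tracking its degree through Lemma~$4.6$'s linear bounds — is the delicate part; everything else (steps 2, 4, 5) is a fairly direct application of the cited theorems plus bookkeeping. I would also need to handle the diagonal-extension bookkeeping carefully: $\lambda$ is carried not by $\sigma$ itself but by a diagonal extension of a subtrack of $\sigma$, so at the very start one replaces $\sigma$ by such an extension $\sigma'$, notes that $d(\sigma,\sigma')$ is bounded because diagonal-extension branches don't change the vertex cycle set much (or are absorbed into the $O(1)$ via Theorem~$3.1$-type reasoning on $PE$), and proceeds with $\sigma'$ in place of $\sigma$.
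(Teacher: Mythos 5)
Your overall strategy---follow Masur--Minsky, use Theorem~4.3 to detect non-recurrence quantitatively and Theorem~3.2 to convert intersection bounds into distance bounds---is the same as the paper's, but two of your key steps have genuine gaps. First, your step~(3) does not work as stated: if $u$ is the honest transverse measure on $\tau$ induced by a curve carried by $\sigma\prec\tau$, and $\tau_{0}$ is its support subtrack, then the switch conditions restricted to $\tau_{0}$ are satisfied \emph{exactly} (zero-weight branches contribute zero deviation), so $\tau_{0}$ is automatically recurrent and there is nothing to contradict. The nonzero deviations that Theorem~4.3 is meant to control come from the \emph{diagonal-extension} branches of $\tau'\setminus\tau$, which carry small but positive weight. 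Making this precise requires two quantitative inputs you have not supplied: a bound $m_{0}=O(\omega)$ on the total number of times the carried curve traverses diagonal branches (the paper's Lemma~5.5), and a weight-ratio threshold $R(g,p)$ with $1/R=O(\omega^{2})$ such that $\mu(b)<R\,\mu(b')$ for all diagonal $b$ and supported $b'$ forces $\mu\in\operatorname{int}(PE(\sigma))$ with $\sigma$ recurrent (the paper's Lemma~5.6, where $1/R$ counts the $O(\omega^{2})$ splitting/shifting moves needed to push $\tau$ onto a diagonal extension of $\sigma$, each of which must preserve positivity of the extended measure).

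Second, your accounting of where $O(\omega^{2})$ comes from is inconsistent: you correctly observe that a polynomial intersection bound fed into Theorem~3.2 yields an $O(1)$ distance bound, which would give $k(g,p)=O(1)$, and then assert without argument that the ``cleanest bookkeeping'' gives $O(\omega^{2})$ anyway. In the paper the quadratic bound has a specific source: the quantity $L$ entering the lemma ``$l_{\tau}(\alpha)\le L\Rightarrow d_{\mathcal{C}}(\alpha,\tau)\le h_{L}=O(\log_{\omega}L)$'' is not polynomial but super-exponential, of the form $C\,m_{0}(m_{2}/R)^{m_{3}}$ with $m_{2}\sim\omega^{\omega}$ and $m_{3}\sim\omega$, hence $L\sim\omega^{\omega^{2}}$, and it is $\log_{\omega}\bigl(\omega^{\omega^{2}}\bigr)=O(\omega^{2})$ that produces the final bound. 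Without identifying this multiplicative compounding of weight ratios across the nested carrying (which is exactly the content of the paper's Lemmas~5.3, 5.6, and~5.7), your argument either proves too much (a constant bound, which the method does not actually deliver) or proves nothing quantitative at all.
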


\begin{remark} When convenient, we will assume our train tracks to be generic; as mentioned in \cite{Mas-MurIII}, the proof of the nesting lemma in the generic case is easily extendable to the general setting.
\end{remark}

 If $\mu\in P(\tau)$, define the \textit{combinatorial length} of $\mu$ with respect to $\tau$, $l_{\tau}(\mu)$, to be the integral of $\mu$ over $\mathcal{B}$, that is 
\[ l_{\tau}(\mu):= \sum_{b}\mu(b) \]
We also define 
\[l_{N(\tau)}(\mu):= \min_{\sigma}l_{\sigma}(\mu ) \]
where the minimum is taken over all tracks $\sigma \in N(\tau)$ carrying $\mu$.

We will need the following lemma, as seen in Hammenst\"{a}dt \cite{Ham}: 

\begin{lemma} Let $c$ be a simple closed curve carried by a train track $\tau$. Then the counting measure on $c$ is a vertex cycle of $\tau$ if and only if, for any branch $b$ of $\tau$, the image of $c$ under its corresponding carrying map traverses $b$ at most twice, and never twice in the same direction. 
\end{lemma}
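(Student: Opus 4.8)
The plan is to prove both directions by combining the extreme-point characterization of vertex cycles with the structure of the counting measure. First I would establish some bookkeeping: if $c$ is carried by $\tau$ with carrying map $\phi$, write $\mu_c$ for the counting measure, so $\mu_c(b)$ records how many times $\phi(c)$ runs over the branch $b$. Since $c$ is a single simple closed curve, $\phi(c)$ is a single closed train path on $\tau$ (up to isotopy), and the key geometric input is that $c$ being \emph{simple} forces the strands of $\phi(c)$ lying in a small neighborhood of each branch to be embedded and, in particular, pairwise non-crossing; this is what will ultimately bound the number of parallel strands over a branch and control their co-orientations.

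For the ``only if'' direction I would argue by contrapositive: suppose some branch $b$ is traversed at least three times by $\phi(c)$, or is traversed twice in the same direction. In either case I claim $\mu_c$ can be written as a nontrivial convex combination of two distinct transverse measures carried by $\tau$, contradicting that $[\mu_c]$ is an extreme point of $P(\tau)$. The mechanism is a cut-and-splice operation on the closed train path $\phi(c)$: at a branch $b$ traversed with multiplicity $\geq 3$, or traversed twice coherently, one can find two ``parallel'' sub-arcs of $\phi(c)$ running over $b$ in the same direction, cut $\phi(c)$ at the four endpoints of these sub-arcs, and reconnect the loose ends in the other possible way. Because the two sub-arcs are parallel over $b$ and co-oriented, the reconnection respects the switch conditions everywhere, so it produces one or two new closed (multi-)train-paths whose counting measures $\mu_1,\mu_2$ are again transverse measures on $\tau$, satisfy $\mu_1+\mu_2 = 2\mu_c$ (counting-measure additivity), and are not both proportional to $\mu_c$ (the new curve system has strictly smaller weight on some branch). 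Hence $[\mu_c]$ is not extremal. The case ``traversed twice in the same direction'' is where I expect the most care, since there one needs to check that the resplicing does not simply recover $c$ with a reparametrization; simplicity of $c$ is used here to guarantee the two strands are genuinely distinguishable and the splice is nontrivial.

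For the ``if'' direction, suppose $\phi(c)$ traverses every branch at most twice and never twice in the same direction, and suppose for contradiction that $\mu_c = \tfrac12(\nu_1+\nu_2)$ with $\nu_1,\nu_2 \in P(\tau)$ distinct (after normalizing). Then $\nu_1(b) \le \mu_c(b) \cdot(\text{bounded factor})$ and in particular $\nu_i$ is supported on the same branch set as $\mu_c$; the bound ``at most twice, never coherently'' means that over each branch the two strands of $\phi(c)$ carry opposite co-orientations, which pins down the local combinatorial type so rigidly that any transverse measure dominated appropriately by $\mu_c$ must be a scalar multiple of it — one propagates the equality $\nu_1(b)=\lambda\,\mu_c(b)$ from one branch to adjacent branches across switches using the switch conditions and connectedness of the train path $\phi(c)$. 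This forces $\nu_1,\nu_2$ proportional to $\mu_c$, so $[\mu_c]$ is extremal, i.e.\ a vertex cycle.

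The main obstacle, as flagged above, is the resplicing argument in the ``only if'' direction: making precise that when a branch is hit three times, or twice coherently, the cut-and-reconnect genuinely yields \emph{distinct} transverse measures and respects all switch conditions, rather than just permuting the strands of $c$. Since this is quoted from Hammenstädt, in the write-up I would either reproduce her surgery argument in the generic-track case (Remark 5.2 lets us assume genericity) or cite \cite{Ham} directly and sketch only the convex-combination mechanism; given the paper's style, citing with a brief indication of the surgery is the natural choice.
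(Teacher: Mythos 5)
The paper does not actually prove this lemma --- it is quoted verbatim from Hammenst\"adt \cite{Ham} and used as a black box --- so there is no internal argument to compare yours against. Your sketch is essentially the standard surgery proof, and the mechanism is right in both directions, but three points need attention before it would count as a proof. First, a simplification and a slip in the ``only if'' direction: if a branch is traversed three or more times, two of the traversals are coherent by pigeonhole, so the two bad cases collapse into the single case of a coherently-twice-traversed branch $b$. Writing the closed train path as $b\cdot\gamma_1\cdot b\cdot\gamma_2$ (both traversals of $b$ running from switch $u$ to switch $v$, with $\gamma_1,\gamma_2$ train paths from $v$ back to $u$), the resplice produces the two closed train paths $b\gamma_1$ and $b\gamma_2$ --- legal at the switches precisely because the two traversals are coherent --- and their counting measures satisfy $\mu_1+\mu_2=\mu_c$, not $2\mu_c$ as you wrote.

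Second, the degenerate case you flag (the resplice yielding $\mu_1,\mu_2$ both projectively equal to $\mu_c$, so the decomposition fails to witness non-extremality) must actually be closed off, and the closure is exactly the ``simplicity'' input you gesture at: if $\mu_1=s\mu_c$ with $s\in(0,1)$, then integrality of $\mu_1$ and $\mu_c$ forces $\mu_c=q\nu$ for some integral transverse measure $\nu$ and integer $q\geq 2$, which would make $c$ a union of $q$ parallel copies of a multicurve rather than a single simple closed curve. Third, and most seriously, the ``if'' direction is asserted rather than argued. Propagating $\nu_1=\lambda\mu_c$ across switches using connectivity of $\phi(c)$ does not work by itself --- connected recurrent tracks generically carry high-dimensional cones of measures --- so the entire content of that direction is showing that the support subtrack of $\mu_c$, with every branch of weight $1$ or $2$ and the doubled strands anti-coherent, admits a unique projective transverse measure. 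That rigidity argument is the heart of Hammenst\"adt's proof and is missing here. Given that the paper itself only cites \cite{Ham}, doing the same and sketching the surgery is stylistically fine; but if you intend to write the proof out, those are the gaps to fill.
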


Since the vertex cycles are the extreme points of $P(\tau)$, by the classical Krein-Milman theorem, any projective transverse measure class can be written as a convex combination of vertex cycles; that is, given $\kappa \in P(\tau)$, there exists $(a_{i})$ such that 

 \begin{equation} \label{3.1}
\kappa= \sum_{i}a_{i}\alpha_{i} 
\end{equation}

where $(\alpha_{i})$ are the vertex cycles of $\tau$. Any train track on $S_{g,p}$ has at most $18g+6p-18$ branches, and therefore by Lemma $5.2$, if $\tau$ is any train track and $\alpha$ is a vertex cycle, 
\[ l_{\tau}(\alpha)\leq 2(18g+6p-18). \]
Lemma $5.2$ also implies that any train track $\tau$ has at most $3^{18g+6p-18}$ vertex cycles, since any branch is traversed once, twice, or no times. We therefore conclude that, given $\lambda$ as in equation \ref{3.1},   
\begin{equation}\label{3.2}
 \max_{i}a_{i}\leq l_{\tau}(\sigma)< \left[(2(18g+6p-18))\cdot 3^{18g+6p}\right] \max_{i} a_{i} 
\end{equation}

\begin{equation} \label{3.3}
= C\cdot \max_{i} a_{i} 
\end{equation}

\begin{lemma} 

Given $L>0$, there exists a function $h_{L}(g,p)=O(\log_{\omega(g,p)}(L))$ such that  if $\alpha \in P(\tau)$ and $l_{\tau}(\alpha)\leq L$, then $d_{\mathcal{C}}(\alpha,\tau)< h_{L}(g,p)$. 

\end{lemma}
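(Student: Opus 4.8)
The plan is to bound $d_{\mathcal C}(\alpha,\tau)$ by comparing $\alpha$ with a single vertex cycle of $\tau$ and then extracting a distance bound from the effective intersection-growth estimate of Theorem $3.2$. I would first make three reductions: we may assume $\tau$ is generic (as in the remark following Lemma $5.1$); we may assume $\alpha$ is represented by a simple closed curve carried by $\tau$ whose counting measure $\mu$ has $l_\tau(\mu)\le L$, since this is the only case in which Lemma $5.4$ is invoked in the proof of the nesting lemma (there $\alpha$ is always a vertex cycle arising in a Krein--Milman decomposition); and we may take $\omega=\omega(g,p)$ larger than the threshold $N(1/2)$ of Theorem $3.2$ applied with $\lambda=1/2$, the asymptotic statement being vacuous otherwise. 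With these in place, $d_{\mathcal C}(\alpha,\tau)$ means $\min_v d_{\mathcal C}(\alpha,c_v)$ over curve representatives $c_v$ of the vertex cycles $v$ of $\tau$.

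Now fix any vertex cycle $v$ of $\tau$. By Hamenstädt's criterion its counting measure is at most $2$ on every branch, so $l_\tau(c_v)\le 2(18g+6p-18)$ using the Penner--Harer bound on the number of branches. Since $\alpha$ and $c_v$ are both carried by $\tau$, I would realize them in a tie neighborhood of $\tau$, where they may be made disjoint inside every branch and meet only inside the switch blocks; a standard count there gives the crude estimate
\[
i(\alpha,c_v)\;\le\; 4\,l_\tau(\mu)\,l_\tau(c_v)\;\le\; 8(18g+6p-18)\,L \;=\; O(\omega\,L)
\]
(see \cite{Pen-Har}). If $i(\alpha,c_v)=0$ then $d_{\mathcal C}(\alpha,c_v)\le 1$ and there is nothing to prove, so assume $i(\alpha,c_v)\ge 1$.

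Set $k:=d_{\mathcal C}(\alpha,c_v)$ and apply Theorem $3.2$ with $\lambda=1/2$: combining $i(\alpha,c_v)\ge\big(\omega^{1/2}/q(g,p)\big)^{k-2}$ with the upper bound above and taking logarithms yields
\[
k\;\le\; 2+\frac{\log\!\big(8(18g+6p-18)\,L\big)}{\log\!\big(\omega^{1/2}/q(g,p)\big)}.
\]
Here I would use the explicit bound $q(g,p)<1000\log_2(100\omega)$ recorded after Theorem $3.2$: it gives $\omega^{1/2}/q(g,p)>1$ and $\log q(g,p)=O(\log\log\omega)$, hence $\log\!\big(\omega^{1/2}/q(g,p)\big)\ge\tfrac13\log\omega$ for all sufficiently large $\omega$. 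Substituting, $k\le 2+\frac{\log L+O(\log\omega)}{\tfrac13\log\omega}=O(\log_\omega L)+O(1)$. Since $v$ was arbitrary, $d_{\mathcal C}(\alpha,\tau)\le k$, and taking $h_L(g,p)$ to be one more than this bound proves the lemma; the additive $O(1)$ is negligible in every application, where $L$ grows with $\omega$ so that $\log_\omega L$ dominates.

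The one step with any content is the last estimate, in which we divide by $\log\!\big(\omega^{1/2}/q(g,p)\big)$: the conclusion $h_L(g,p)=O(\log_\omega L)$ rests entirely on $q(g,p)$ growing only polylogarithmically in $\omega$, i.e.\ on the bound $q(g,p)=O(\log_2\omega)$ from \cite{Aoug}; merely polynomial control on $q$ would make $\log(\omega^{1/2}/q)$ bounded and destroy the estimate. The reduction to a vertex cycle and the intersection count are routine. A minor point for the final write-up is to fix the meaning of $d_{\mathcal C}(\alpha,\tau)$ for a general $\alpha\in P(\tau)$ not supported on a multicurve; as explained above, the lemma is only used with $\alpha$ a carried simple closed curve, so one restricts to that case from the start.
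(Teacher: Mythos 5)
Your proof is correct and follows essentially the same route as the paper's: bound $i(\alpha,v)$ for a vertex cycle $v$ using Hamenst\"adt's criterion (Lemma $5.2$), convert this to a distance bound via Theorem $3.2$, and control the denominator using the polylogarithmic bound on $q(g,p)$, exactly the issue the paper isolates in Remark $5.4$. The only difference is that your intersection estimate $i(\alpha,c_v)=O(\omega L)$ is cruder than the paper's $i(v,\alpha)\le 2L$ (each of the at most $L$ branch traversals of $\alpha$ meets $v$ at most twice), which costs only an additive constant after taking $\log_\omega$ and does not affect the conclusion.
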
 

\begin{proof}

Suppose $l_{\tau}(\alpha)\leq L$. We will abuse notation and refer to the image of $\alpha$ under its carrying map by $\alpha$. Then every time $\alpha$ traverses a branch of $\tau$, by Lemma $5.2$, it can intersect a vertex cycle at most twice. Therefore, if $v$ is any vertex cycle of $\tau$, 
\[ i(v, \alpha)\leq 2L, \]
and hence by Theorem $3.2$, for any $\lambda \in (0,1)$ and $g=g(\lambda)$ sufficienly large,  
\begin{equation} \label{3.4}
d_{\mathcal{C}}(v,\alpha)\leq \frac{\log_{\omega}(2L)}{\lambda(\log_{\omega}(3)+1)-\log_{\omega}(1000\cdot \log_{2}(100\omega))}+2 
\end{equation}
\[ =O(\log_{\omega}(L)). \]

\end{proof}

\begin{remark} One needs to be cautious in manipulating the inequality in Theorem $3.2$ to obtain Equation $5.4$; if 
\[\rho(\omega,\lambda):= \lambda(\log_{\omega}(3)+1)-\log_{\omega}(1000\cdot\log_{2}(100\omega))<0,\]

 the direction of the inequality changes and we will not get the desired upper bound on curve graph distance. However, 
\[ \lim_{\omega\rightarrow \infty} \rho(\omega,\lambda)= \lambda>0,\]
and therefore for sufficiently large $\omega$ this is not an issue. 

\end{remark}

\begin{lemma} Suppose $\sigma$ is a large recurrent train track carried by $\tau$ on $S_{g,p}$, and let $\sigma' \in E(\sigma), \tau'\in E(\tau)$ such that $\sigma'$ is carried by $\tau'$. Then the total number of times, counting multiplicity, that branches of $\sigma'$ traverse any branch of $\tau'\setminus \tau$ is bounded above by $m_{0}= 36g+12p$. 
\end{lemma}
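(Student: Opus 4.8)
The plan is to exploit the structure of diagonal extensions: by definition, every branch of $\tau' \setminus \tau$ runs between corners of complementary regions of $\tau$, so such a branch is traversed by a carried curve only when the curve enters a cusp of a complementary region of $\tau$ and "cuts the corner." The key observation is that the counting measure on a carried curve, restricted to $\tau$, satisfies the switch conditions for $\tau$; the only way a branch $d$ of $\tau' \setminus \tau$ can be traversed is between two half-branches of $\tau$ that are on the same side of a switch (i.e., in the same germ), since a diagonal branch connects cusps. So first I would set up the carrying map $\phi: \sigma' \to \tau'$ and, for a fixed branch $d$ of $\tau' \setminus \tau$, count how many strands of $\phi(\sigma')$ pass over $d$; call this $N(d)$. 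I want $\sum_{d \in \tau'\setminus\tau} N(d) \le 36g+12p$.

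Next, I would bound $N(d)$ for each individual diagonal branch $d$. The branch $d$ emanates from a corner of a complementary region $Q$ of $\tau$; a strand of $\phi(\sigma')$ traversing $d$ must approach that corner from inside $Q$ and then turn. Because $\sigma'$ itself is a train track (in particular has no monogon or bigon complementary components, by the negative generalized Euler characteristic condition, and since $\sigma$ is recurrent so is carried with positive weights), the number of distinct strands that can simultaneously round a single corner of $Q$ in $\phi(\sigma')$ is controlled: two strands rounding the same corner in the same direction would, together with the relevant piece of $\tau$, bound a bigon, contradicting that $\sigma'$ carries no bigon. Thus $N(d)$ is bounded by a small constant — I expect $N(d) \le 1$ after accounting for orientation, so that $\sum_d N(d)$ is at most the number of diagonal branches of $\tau'$, which is at most the number of cusps of complementary regions of $\tau$.

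Finally, I would bound the number of cusps. A large track $\tau$ on $S_{g,p}$ has complementary regions with total (generalized) Euler characteristic equal to $\chi(S_{g,p}) = 2-2g-p$; since each region $Q$ satisfies $\chi(Q) - \tfrac12 V(Q) < 0$ with $V(Q) \ge 1$, summing gives a linear bound on $\sum_Q V(Q)$, the total cusp count, of order $g+p$. Tracking the constants: $\sum_Q V(Q) = 2\sum_Q(\chi(Q) - (2-2g-p)/(\#\text{regions})) \le \ldots$; running this through carefully, and using Lemma 5.5's bounds $|\mathcal{B}| \le 18g+6p-18$, $|\mathcal{S}| \le 12g+4p-12$ (which also cap the number of complementary regions and diagonal branches), yields the stated $m_0 = 36g+12p$ — with the factor accounting for the at-most-two strands through each diagonal branch when both orientations are possible.

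The main obstacle will be the middle step: carefully justifying, via an innermost-bigon argument, that only boundedly many strands of $\phi(\sigma')$ can traverse a given diagonal branch $d$, and pinning down the exact constant. One must be careful that diagonal \emph{extension} branches of $\sigma'$ (branches of $\sigma' \setminus \sigma$) do not themselves contribute extra, uncounted traversals; here the hypothesis $\sigma' \in E(\sigma)$ with $\sigma$ large and recurrent is essential, because it forces $\sigma'$ to be an honest train track with controlled complementary regions, so the bigon-free property applies. Once that constant is nailed down, combining it with the linear cusp bound is routine arithmetic.
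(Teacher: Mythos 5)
Your decomposition is transposed relative to the one that actually works, and the transposed version breaks down at exactly the step you flag as the main obstacle. The paper (following Masur--Minsky) counts branch-by-branch in $\sigma'$: each branch of $\sigma'$ traverses branches of $\tau'\setminus\tau$ at most twice in total (branches of $\sigma$ itself never traverse branches of $\tau'\setminus\tau$, since $\sigma$ is carried by $\tau$; only the diagonal branches of $\sigma'\setminus\sigma$ can, and each contributes at most two traversals), and since any track has at most $18g+6p$ branches the total is at most $2(18g+6p)=36g+12p$. You instead fix a diagonal branch $d$ of $\tau'\setminus\tau$ and try to bound the number $N(d)$ of strands of the carried image of $\sigma'$ running over $d$ by $1$ (or $2$) via an innermost-bigon argument. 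That bound is false in general, and the bigon argument does not establish it: two strands of a carried track running over the same branch of $\tau'$ are just parallel strands inside the tie neighborhood of $\tau'$; they need not be joined to one another anywhere, so they do not cobound a bigon complementary to $\sigma'$, and no contradiction with the Euler characteristic condition arises. Concretely, several diagonal branches of $\sigma'\setminus\sigma$ emanating from the same cusp of a complementary region of $\sigma$ are all carried into $\tau'$ starting at the corresponding cusp of a complementary region of $\tau$, and they can all run over the same diagonal branch $d$ before diverging; thus $N(d)$ is bounded only by the number of such diagonals of $\sigma'$, not by an absolute constant.

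The missing idea is therefore the per-branch-of-$\sigma'$ bound: first observe that a branch of $\sigma$ is carried into $\tau$ and hence never meets $\tau'\setminus\tau$, so only branches of $\sigma'\setminus\sigma$ are relevant, and then show each such branch traverses branches of $\tau'\setminus\tau$ at most twice. Once you sum over branches of $\sigma'$ rather than over branches of $\tau'\setminus\tau$, the bound $|\mathcal{B}(\sigma')|\leq 18g+6p$ from Lemma $5.2$ (not a cusp count for $\tau$) immediately yields $m_{0}=36g+12p$; your final paragraph's Euler characteristic bookkeeping is then unnecessary, and as written it is too vague to recover the stated constant anyway.
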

\begin{proof} The complete argument can be found in Masur and Minsky's original paper \cite{Mas-Mur} on the hyperbolicity of the curve complex. For our purposes and for the sake of brevity, it suffices here to simply remark that they show any given branch of $\sigma'$ can only traverse branches of $\tau'\setminus \tau$ at most twice. Then, since any track has less than $18g+6p$ branches, the result follows. 
\end{proof}

To prove the following lemma, we use the results from section $4$:

\begin{lemma} There exists $R=R(g,p)$ with 
\[ \frac{1}{R(g,p)}= O\left(\omega^{2}\right),\]

 such that if $\sigma<\tau$ and $\sigma$ is large and $\tau$ is generic, $\mu\in P(\tau)$ and every branch $b$ of $\tau\setminus \sigma$ and $b'$ of $\sigma$ satisfies $\mu(b)<R(g)\mu(b')$, then $\mu\in int(PE(\sigma))$, and $\sigma$ is recurrent.
\end{lemma}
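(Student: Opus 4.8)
The plan is to use the hypothesis $\mu(b) < R(g,p)\,\mu(b')$ for all $b \in \tau\setminus\sigma$ and $b' \in \sigma$ to show two things: first that $\mu$ restricts to a strictly positive measure on $\sigma$ (hence $\sigma$ is recurrent, after checking $\mu$ restricted to $\sigma$ really is a transverse measure on $\sigma$ — this uses that $\sigma < \tau$, so the switch conditions for $\sigma$ are inherited), and second that $\mu$ extends to a measure in $PE(\sigma)$, i.e. that the ``diagonal'' part of $\mu$ coming from branches of $\tau\setminus\sigma$ can be realized on a diagonal extension of $\sigma$. The key quantitative input is Theorem~4.3: if $\sigma$ (or the relevant diagonal extension) were non-recurrent, there is an invisible branch and the incidence matrix satisfies $\|L_\sigma(u)\|_{sup} \geq u_{\min}/(12g+4p-12)$ for positive $u$, whereas the switch conditions on $\tau$ force the deviations of $\mu$ restricted to $\sigma$ at each switch to be small — of size at most $(\text{number of }\tau\setminus\sigma\text{ branches through that switch})\cdot \max_{b\in\tau\setminus\sigma}\mu(b)$, which by hypothesis is at most a constant times $R(g,p)\cdot\mu_{\min}$ on $\sigma$. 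Choosing $R(g,p)$ so that this product is strictly less than $\mu_{\min}/(12g+4p-12)$ gives the contradiction, and since the number of branches and switches is $O(\omega)$ by Lemma~4.6, one can take $1/R(g,p) = O(\omega^2)$.

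Concretely, I would proceed as follows. Step one: observe that $\nu := \mu|_{\mathcal{B}(\sigma)}$ satisfies the switch conditions of $\sigma$ \emph{up to error} — at a switch $s$ of $\sigma$, the failure of the switch condition for $\nu$ equals the signed sum of $\mu$-weights of the branches of $\tau\setminus\sigma$ incident at $s$ (this is where genericity of $\tau$ is convenient: it controls how many such branches there can be, since each switch of $\tau$ has degree at most $3$). So the deviation $d_s(\nu)$ is bounded by a bounded number times $\max_{b\in\tau\setminus\sigma}\mu(b) < R(g,p)\cdot\mu_{\min}(\sigma)$. Step two: suppose for contradiction that $\sigma$ is non-recurrent. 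By Theorem~4.3 applied to $\sigma$ (with $u = \nu \in \mathbb{R}^{|\mathcal{B}(\sigma)|}_+$), we get $\|L_\sigma(\nu)\|_{sup} \geq \nu_{\min}/(12g+4p-12)$. But $\|L_\sigma(\nu)\|_{sup} = \max_s d_s(\nu) < R(g,p)\cdot\mu_{\min}(\sigma)$, and $\nu_{\min} = \mu_{\min}(\sigma)$; so we need $R(g,p) < 1/(12g+4p-12)$, i.e. $1/R(g,p) > 12g+4p-12 = O(\omega)$. Step three: upgrade recurrence of $\sigma$ to the statement $\mu \in int(PE(\sigma))$. Here one must produce a diagonal extension $\sigma' \in E(\sigma)$ carrying $\mu$; the branches of $\tau\setminus\sigma$ run along corners of complementary regions of $\sigma$ (since $\sigma$ is large, after possibly smoothing, its complementary regions are polygons and the extra $\tau$-branches connect their cusps), so they naturally give — or are isotopic to — diagonals of $\sigma$. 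One then reruns the same deviation estimate on $\sigma'$, which has at most $O(\omega)$ branches and switches (Lemma~4.6), to conclude $\sigma'$ is recurrent and that $\mu|_\sigma > 0$, i.e. $\mu \in int(PE(\sigma))$. The ``$int$'' (strict positivity on $\sigma$) is free: by hypothesis $\mu(b') > \mu(b)/R(g,p) > 0$ for every branch $b'$ of $\sigma$, as long as some branch of $\tau\setminus\sigma$ carries positive weight; if $\tau\setminus\sigma$ is empty or all-zero then $\mu$ already lives on $\sigma$ and positivity must be argued directly from $\mu \in P(\tau)$ together with $\sigma$ large.

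The main obstacle I anticipate is Step three: translating the ``small deviation'' contradiction into the precise statement $\mu \in int(PE(\sigma))$ rather than just ``$\sigma$ is recurrent.'' The subtlety is that $\mu$ is a genuine transverse measure on $\tau$, not on $\sigma$, so one must (a) correctly identify a diagonal extension $\sigma'$ of a large recurrent subtrack of $\sigma$ on which $\mu$ is supported, accounting for the branches of $\tau\setminus\sigma$, and (b) verify the switch conditions of $\sigma'$ hold exactly (not just approximately) — which they should, since $\mu$ satisfies the switch conditions of $\tau$ exactly and $\sigma' $ is built from a subset of $\tau$'s branches sitting inside $\tau$. The bookkeeping around which branches of $\tau\setminus\sigma$ are ``diagonal'' versus which ones force a passage to a proper large recurrent subtrack of $\sigma$ is where Masur--Minsky's original argument is delicate; I would follow their Lemma~4.1 proof closely here. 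The final constant: $\max(\text{branch count}, \text{switch count}) = O(\omega)$, and the bound $\max_i a_i \leq C \max_i a_i$ with $C$ from \eqref{3.3} together with the convex-combination bound \eqref{3.2} are what force the \emph{square}, giving $1/R(g,p) = O(\omega^2)$.
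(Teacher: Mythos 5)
Your Step two (recurrence of $\sigma$ via Theorem~4.3 applied to $\nu=\mu|_{\sigma}$, using genericity of $\tau$ to bound the number of $\tau\setminus\sigma$-branches contributing to each deviation) matches the paper's argument for that half of the conclusion. But there is a genuine gap in Step three, and it is precisely the step that determines the order of $1/R$. Branches of $\tau\setminus\sigma$ are \emph{not} in general already diagonals of $\sigma$ (nor isotopic to them rel $\sigma$): they may be attached along the interiors of branches of $\sigma$, far from any corner of a complementary region. The paper's proof (following Masur--Minsky) combs them into corners by a sequence of splitting and shifting moves; each split performed against a branch $b$ of $\tau\setminus\sigma$ replaces a branch $c$ of $\sigma$ by a branch carrying weight $\mu(c)-\mu(b)$, so $\mu$ survives as a positive measure on the resulting diagonal extension only if every $\sigma$-branch weight exceeds the \emph{accumulated} subtractions. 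Since each of the $O(\omega)$ branches of $\tau\setminus\sigma$ may require $O(\omega)$ moves, one needs $\min_{\sigma}\mu>\bigl((18g+6p)(18g+6p+1)\bigr)\max_{\tau\setminus\sigma}\mu$, which is exactly why the paper takes $1/R=(18g+6p)(18g+6p+1)+1=O(\omega^{2})$. Your proposal never carries out this counting; ``rerunning the deviation estimate on $\sigma'$'' does not substitute for it, because the issue is not approximate satisfaction of switch conditions but whether the extended weights remain positive after all the splits.

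Relatedly, your closing attribution of the square to the convex-combination constants of equations (3.2)--(3.3) is incorrect: those constants enter Lemma~5.7 (the definition of $y(g,p)$), not this lemma. The quadratic bound here comes solely from the (number of branches of $\tau\setminus\sigma$) $\times$ (number of moves per branch) count in the combing argument. As written, your argument only justifies $1/R=O(\omega)$, which suffices for the recurrence conclusion but not for $\mu\in int(PE(\sigma))$.
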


\begin{proof}
We follow Masur and Minsky's original argument \cite{Mas-Mur}. The main tools are the elementary moves on train tracks called \textit{splitting} and \textit{shifting} as introduced in section $3$ (see Figures $2$ and $3$), which can be used to take $\tau$ to a diagonal extension of $\sigma$. In order to do this, we need to move any branch of $\tau\setminus \sigma$ into a corner of a complementary region of $\sigma$. A split or a shift applied to any such branch either reduces the number of branches of $\tau\setminus \sigma$ incident to a given branch of $\sigma$, or decreases the distance between a branch of $\tau \setminus \sigma$ and a corner of a complementary region of $\sigma$. 

Thus, a bounded number of such moves produces a track carried by a diagonal extension of $\sigma$.  If a splitting is performed involving a branch $b$ of $\tau\setminus \sigma$ and a branch $c$ of $\sigma$, the resulting track contains a new branch $c'$ of $\sigma$, and we can extend $\mu$ to $c'$ to be consistent with the switch conditions by assigning $\mu(c')= \mu(c)-\mu(b)$. In particular, a sufficient condition for being able to define $\mu$ on the new track is

\begin{equation}\label{4.5}
 \mu(c)>\mu(b). 
\end{equation}

There are at most $18g+6p$ branches of $\tau\setminus \sigma$, and at most $18g+6p$ branches of $\sigma$ or $\tau$. As earlier mentioned, a splitting move either reduces the number of branches of $\tau\setminus \sigma$ incident to $\sigma$, or it reduces the number of edges of $\sigma$ between a given branch of $\tau \setminus \sigma$ and a corner that it faces. Once a branch of $\tau \setminus \sigma$ is separated by a corner of  a complementary region of $\sigma$ by only edges of $\sigma$ for which no splitting moves can be performed, a shift move takes such an edge to a corner point. Therefore, each edge of $\tau \setminus \sigma$ is taken to a corner of $\sigma$ after no more than $18g+6p+1$ shiftings and splittings, and therefore we obtain $\tau'$ after at most $(18g+6p)(18g+6p+1)$ such moves. 

Now, let $R(g,p)= \frac{1}{(18g+6p)(18g+6p+1)+1}$, and assume that for this value of $R$, the hypothesis of the statement is satisfied. In light of equation \ref{4.5}, $\mu$ is definable on the diagonal extension $\tau'$ we obtain after splitting and shifting as long as 

\begin{equation} \label{4.6}
\min_{\sigma}\mu > \frac{1}{R(g,p)}\max_{\tau\setminus \sigma} \mu, 
\end{equation}

which is precisely what the hypothesis of Lemma $5.6$ implies. Therefore, $\mu$ is extendable to a diagonal extension of $\sigma$ such that all branches receive positive weights, hence $\mu\in int(PE(\sigma))$. 

It remains to show that $\sigma$ is recurrent; suppose not. Let $\mathcal{B}(\sigma)$ denote the branch set of $\sigma$. Then Theorem $4.3$ implies that if $u\in \mathbb{R}^{|\mathcal{B}(\sigma)|}$ is a vector with all positive coordinates, 

\[ \|L_{\sigma}(u)\| \geq \frac{u_{min}}{12g+4p-12}. \]

In light of equation $5.6$, the vector $\mu$ has small deviations, since $\mu$ satisfies the switch conditions on $\sigma$, up to the additive error coming from the weight it assigns to any branch of $\tau\setminus \sigma$, which is less than 

\[ \frac{\mu_{min}}{R(g,p)}; \]

since we assumed that $\tau$ is generic, there are at most two branches of $\tau \setminus \sigma$ incident to any branch of $\sigma$, and therefore the deviations of $\mu$ are all less than $\frac{\mu_{min}}{12g+4p-12}$, contradicting Theorem $4.3$. 

\end{proof}

\begin{lemma} Let $L>0$ be given. Then there exist functions $s_{L}(g,p)$ and $y(g,p)=O(\omega^{3}3^{18\omega})$ satisfying the following:
If $\sigma$ is large and carried by $\tau$ and $\sigma'\in E(\sigma), \tau'\in E(\tau)$ such that $\tau'$ carries $\sigma'$, and if $d_{\mathcal{C}}(\sigma,\tau)\geq s_{L}$, then any simple closed curve $\beta$ carried on $\sigma'$ can be written in $P(\tau')$ as $\beta_{\tau}+\beta'_{\tau}$, and such that
\[ l_{\tau'}(\beta'_{\tau})\leq y(g,p)\cdot l_{\sigma'}(\beta), \hspace{1 mm} \mbox{and}\]
\[ l_{\tau}(\beta_{\tau})\geq s_{L}(g,p) l_{\sigma'}(\beta). \]

\end{lemma}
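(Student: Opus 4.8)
The plan is to follow the Krein–Milman decomposition together with Lemma~5.7, in the style of Masur and Minsky, but tracking every constant. First I would fix a simple closed curve $\beta$ carried by $\sigma'$ and write its counting measure with respect to $\sigma'$ as a convex combination $\beta = \sum_i a_i \alpha_i$ of vertex cycles $\alpha_i$ of $\sigma'$, as in equation~(5.1). Since $\beta$ is carried by $\tau'$ as well, I want to split off from each $\alpha_i$ the part that lives ``essentially on $\tau$'' versus the part that is forced to be small on $\tau \setminus \sigma$. The key quantitative input is Lemma~5.6: if $\mu \in P(\tau)$ assigns, to every branch $b$ of $\tau\setminus\sigma$ and every branch $b'$ of $\sigma$, weights satisfying $\mu(b) < R(g,p)\mu(b')$, then $\mu$ lies in $int(PE(\sigma))$ and $\sigma$ is recurrent; the threshold $1/R(g,p) = O(\omega^2)$. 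So I would argue that a vertex cycle $\alpha_i$ of $\sigma'$ whose weights on $\tau'\setminus\tau$ are \emph{not} negligible relative to its weights on $\sigma$ (i.e. the ratio exceeds $1/R$) must, via Lemma~5.5, carry a definite amount of intersection with the vertex cycles of $\tau$; hence by Lemma~5.4 such an $\alpha_i$ forces $d_{\mathcal{C}}(\sigma,\tau)$ to be \emph{small}. Taking $s_L(g,p)$ large enough — larger than the relevant $h$-type bound from Lemma~5.4 plus a correction — rules this out, so in the regime $d_{\mathcal{C}}(\sigma,\tau)\ge s_L$ every vertex cycle of $\sigma'$ splits as desired.

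Concretely, the decomposition $\beta = \beta_\tau + \beta'_\tau$ in $P(\tau')$ would be obtained by: (i) pushing $\beta$ forward to $\tau'$ via the carrying map, (ii) letting $\beta'_\tau$ be the portion of the resulting transverse measure supported on (or forced by) branches of $\tau'\setminus\tau$ together with the compensating corrections on adjacent branches of $\tau$, and (iii) letting $\beta_\tau$ be the remainder, which by construction satisfies the switch conditions on a large recurrent subtrack and thus lies in $PN(\tau)$. The bound $l_{\tau'}(\beta'_\tau) \le y(g,p)\, l_{\sigma'}(\beta)$ then comes from Lemma~5.5: the total weight that branches of $\sigma'$ put on any branch of $\tau'\setminus\tau$ is at most $m_0 = 36g+12p$ times the multiplicity count, and propagating this through the at most $3^{18g+6p-18}$ vertex cycles, each of combinatorial length at most $2(18g+6p-18)$ (i.e. through the constant $C$ of equation~(5.3)), together with the $O(\omega)$ factor from the number of switches and the $O(\omega^2)$ factor from $1/R$, yields a bound of the shape $y(g,p) = O(\omega^3 \cdot 3^{18\omega})$. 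The complementary lower bound $l_\tau(\beta_\tau) \ge s_L(g,p)\, l_{\sigma'}(\beta)$ is essentially the statement that, after removing the $\beta'_\tau$ part, what remains is a definite multiple of $\beta$: since $l_{\sigma'}(\beta) \le l_{\tau'}(\beta)$ and $l_{\tau'}(\beta'_\tau)$ is controlled, one gets $l_\tau(\beta_\tau) \ge l_{\tau'}(\beta) - y(g,p) l_{\sigma'}(\beta)$, and I would need $s_L$ chosen so that in the high-distance regime the carried curve $\beta$ is ``spread out'' enough on $\tau$ to force $l_{\tau'}(\beta)$ to dominate — this is where Theorem~3.2, relating curve graph distance to intersection number, re-enters to guarantee that a curve realizing distance $\ge s_L$ from $\tau$ has large combinatorial length on $\tau$.

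The main obstacle I anticipate is making the ``splitting of vertex cycles'' step uniform: a priori the decomposition is carried out curve-by-curve, and one must check that the \emph{same} threshold $s_L$ works simultaneously for all vertex cycles of $\sigma'$ and that the bad alternative (a vertex cycle with non-negligible mass on $\tau'\setminus\tau$) genuinely produces a \emph{short} curve-graph path rather than merely a long one — this requires carefully invoking Lemma~5.4 with the correct value of $L$ (here $L$ is the input parameter of the lemma, bounding $l_{\sigma'}(\beta)$) and checking the logarithmic dependence $h_L(g,p) = O(\log_\omega L)$ does not spoil the $O(\omega^3 3^{18\omega})$ form of $y$. A secondary technical point is controlling how the diagonal-extension branches of $\tau'\setminus\tau$ interact with the corners of $\sigma'$ when one applies the Lemma~5.6 criterion on $\sigma$ rather than on $\tau$; I would handle this exactly as in Lemma~5.7's proof, using that $\tau$ is generic so at most two branches of $\tau\setminus\sigma$ abut any branch of $\sigma$, keeping the deviation bound below $\mu_{min}/(12g+4p-12)$.
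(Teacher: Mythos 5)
Your overall architecture --- the Krein--Milman decomposition of $\beta$ into vertex cycles of $\sigma'$, the traversal bound $m_{0}$ together with the weight bound $W_{0}=3$ to control the mass deposited on $\tau'\setminus\tau$, and the length--distance bound $h_{L}$ to exploit the hypothesis $d_{\mathcal{C}}(\sigma,\tau)\geq s_{L}$ --- is exactly the Masur--Minsky argument that the paper does not reproduce but merely cites, recording only the resulting constants $y=C\,m_{0}W_{0}C_{0}$ and $s_{L}=h_{C_{0}L+y}+2B$. So you are attempting more than the paper does, and you have the right ingredients on the table. However, the step that proves the second inequality fails as you have written it. Both sides of $l_{\tau}(\beta_{\tau})\geq s_{L}\,l_{\sigma'}(\beta)$ scale linearly in $\beta$, but the input you invoke --- ``a curve realizing distance $\geq s_{L}$ from $\tau$ has large combinatorial length on $\tau$'' --- applied to $\beta$ itself yields only an \emph{absolute} lower bound $l_{\tau'}(\beta)\geq L'$, which is vacuous once $l_{\sigma'}(\beta)$ is large; and the chain $l_{\sigma'}(\beta)\leq l_{\tau'}(\beta)$ combined with $l_{\tau'}(\beta'_{\tau})\leq y\,l_{\sigma'}(\beta)$ gives only $l_{\tau}(\beta_{\tau})\geq (1-y)\,l_{\sigma'}(\beta)$, which is negative since $y\gg 1$. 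The vertex-cycle decomposition, which you set up but then do not use at this step, is precisely what repairs this: each vertex cycle $\alpha_{i}$ of $\sigma'$ has $l_{\sigma'}(\alpha_{i})\leq C_{0}$ \emph{bounded}, while $d_{\mathcal{C}}(\alpha_{i},\tau)\geq s_{L}-2B$ forces, by the contrapositive of the $h_{L}$ lemma, $l_{\tau'}(\alpha_{i})>C_{0}L+y$; the resulting \emph{ratio} bound $l_{\tau'}(\alpha_{i})/l_{\sigma'}(\alpha_{i})$ then passes to the convex combination via the comparison (5.2)--(5.3), losing only the factor $C$, and this is exactly where the value $s_{L}=h_{C_{0}L+y}+2B$ comes from.

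A second, smaller problem is the chain ``non-negligible weight on $\tau'\setminus\tau$ $\Rightarrow$ a definite amount of intersection with the vertex cycles of $\tau$ $\Rightarrow$ $d_{\mathcal{C}}(\sigma,\tau)$ is small.'' This is a non sequitur: the $h_{L}$ lemma converts an \emph{upper} bound on combinatorial length (equivalently, on intersection with vertex cycles) into an upper bound on distance, whereas a \emph{lower} bound on intersection number gives no upper bound on distance --- Theorem 3.2 runs in the opposite direction. The correct use of the threshold $1/R(g,p)$ occurs downstream, in the proof of Lemma 5.1, where the output ratio $y/s_{L}$ of the present lemma is compared against $R(g,p)$ to verify the hypothesis of the recurrence criterion; it plays no role inside the proof of this lemma, and importing it here only obscures the argument. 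Your accounting of $y(g,p)=O(\omega^{3}3^{18\omega})$ is consistent with the paper's $C\,m_{0}W_{0}C_{0}$, except that the factor $O(\omega^{2})$ you attribute to $1/R$ should instead be the factor $C_{0}^{2}$-type term $(36g+12p-36)^{2}$ coming from the vertex-cycle length bound; the $1/R$ factor again belongs to the choice of the subscript $L$ in the proof of Lemma 5.1, not to $y$.
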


\begin{proof} The details of the argument are not entirely relevant for the proof of our main theorem, and can be found in Masur-Minsky \cite{Mas-Mur}; therefore we omit the particulars of the proof, and remark only that in their argument, Masur and Minsky show that it suffices to take 
\[ y(g,p):= C\cdot m_{0}W_{0}C_{0},\]
where $C$ is the constant from equation \ref{3.3}, $m_{0}$ is the constant from the statement of Lemma $5.5$, $W_{0}$ is a bound on the weights that a vertex cycle can place on any one branch of $\sigma'$ (and therefore it suffices to take $W_{0}=3$ by Lemma $5.2$), and $C_{0}$ is a bound on the combinatorial length of any vertex cycle on any train track on $S_{g,p}$. Putting all of this together, we obtain 
\[ y(g,p):=  \left[(2(18g+6p-18))\cdot 3^{18g+6p}\right] (3(36g+12p-36)^{2}) \]
\[= O(\omega^{3}3^{18\omega}), \]
as claimed. 

They also show that it suffices to take 
\[ s_{L}(g,p):= h_{L}(C_{0}L+y(g,p))+2B, \]
where $B$ is a bound on the curve graph distance between any two vertex cycles of the same train track.  

Therefore, by Theorem $3.1$, for sufficiently large $\omega$, 

\begin{equation}\label{4.6}
s_{L}(g,p)\leq h_{L}(C_{0}L+y(g,p))+6. 
\end{equation}

\end{proof}

\textit{Proof of Lemma $5.1$}.

Again with concision in mind, we do not include the entirety of Masur and Minsky's argument; we simply remark here that in our notation, it suffices to choose 
\[ k(g,p):= s_{Cm_{0}\cdot \left(\frac{m_{2}}{R(g,p)}\right)^{m_{3}}}(\omega) \]

Here, $m_{0}$ is as in Lemma $5.5$ and is thus bounded above by $36g+12p$, $m_{2}< (18g+6p)^{18g+6p}$, and $m_{3}<18g+6p$. Thus 
\[ Cm_{0} \cdot \left(\frac{m_{2}}{R(g)}\right)^{m_{3}}\]
\[< \left[(2(18g+6p-18))\cdot 3^{18g}\right] \cdot  (36g+12p)\left( (18g+6p)^{18g+6p+2}  \right)^{18g+6p}=:D,\]

and therefore by Lemma $4.5$, for $\omega(g,p)$ sufficiently large, 

\[ k(g,p)< h_{D}(C_{0}D+y(g,p))+6\]
\[= O(\log_{\omega}(\omega^{3}3^{18\omega}(18\omega)^{324\omega^{2}+36\omega}))\]
\[ =O(\omega^{2}).  \Box \]

\section{Proof of the main theorem and corollaries}
In this section, we prove the main results:

\textbf{Theorem 1.1}: \textit{ Let $\omega(g,p)= 3g+p-4$. There exists a function $K(g,p)=O(\omega(g,p)^{2})$ such that any nested train track sequence with $R$-bounded steps is a $(K(g,p)+R)$-unparameterized quasi-geodesic of the curve graph $\mathcal{C}_{1}(S_{g,p})$, which is $(K(g,p)+R)$-quasiconvex. } \vspace{5 mm}

\textit{Proof}: Where possible, we use the same notation that Masur and Minsky do to avoid confusion. Let $\delta$ be the hyperbolicity constant of $\mathcal{C}_{1}(S)$. By Hensel-Przytycky-Webb \cite{Hen-Prz-We}, it suffices to take $\delta=17$. Let $B$ be a bound on the diameter of the set of vertex cycles of a given train track $\tau \subset S_{g,p}$. As mentioned above, for sufficiently large $\omega$ it suffices to take $B=3$ (see \cite{Aoug} for a proof of this).

Given a nested train sequence $(\tau_{i})_{i}$, consider a subsequence $(\tau_{i_{j}})_{j}$ such that
\[ k(g,p) \leq d_{T}(\tau_{i_{j}},\tau_{i_{j+1}})< k(g,p)+R, \]
and such that if $\tau_{n}$ is any track not in the subsequence $(\tau_{i_{j}})_{j}$, then there is some $c$ for which 
\[ d_{T}(\tau_{i_{c}},\tau_{n})<k(g,p). \]

Then since $d_{T}(\tau_{i_{j}},\tau_{i_{j+1}})\geq k(g,p)$, the effective nesting lemma implies that 
\[ PN(\tau_{i_{j+1}})\subset int(PN(\tau_{i_{j}})) \]

For any train track $\tau$, one always has

\[ \mathcal{N}_{1}(int(PN(\tau)))\subset PN(\tau),\]
 where $\mathcal{N}_{m}$ denotes the $m$-neighborhood in $\mathcal{C}_{1}$. Combining these two inclusions and inducting yields 
\[ \mathcal{N}_{m-1}(PN(\tau_{i_{j+m}}))\subset int(PN(\tau_{i_{j}})). \]

Masur and Minsky then make use of a lemma which implies that no vertex of $\tau_{i_{j}}$ is in $int(PN(\tau_{i_{j}}))$, and therefore

\[ d_{T}(\tau_{i_{j}},\tau_{i_{k}})\geq |k-j|. \]

Thus if $(v_{i_{j}})_{j}$ is any sequence of the vertices of $(\tau_{i_{j}})_{j}$, we have 

\[ |m-n| \leq d_{\mathcal{C}}(v_{i_{n}},v_{i_{m}})< (k(g,p)+R+2B)|m-n|, \]
which implies that $(v_{i_{j}})_{j}$ is a $(k(g,p)+R+2B)$-quasigeodesic. This proves the first part of Theorem $1.1$, with $K(g,p):=2k(g,p)+46$ (we've shown the sequence to be a $(k(g,p)+R+6)$-quasi-geodesic, but we will need the extra $k(g,p)+40$ for the quasiconvexity statement).

We now show $(\tau_{i})_{i\in I_{1}}$ is $(K(g,p)+R)$-quasiconvex.  In any $\delta$-hyperbolic metric space, a geodesic segment $\gamma$ connecting the endpoints of a $K$-quasigeodesic segment $\gamma'$ is contained in a $W$-neighborhood of $\gamma'$, where $ W=W(K,\delta)$. $W$ is sometimes known as the \textit{stability constant}.

Therefore, a geodesic segment connecting any two elements of the vertex cycle sequence $(v_{i_{j}})_{j}$ is contained in a $W(K,\delta)=W(k(g,p)+R+6,17)$-neighborhood of the sequence. 

\begin{lemma} For sufficiently large $\omega$, $W< K(g,p)+R$. 
\end{lemma}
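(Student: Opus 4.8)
The plan is to bound the stability constant $W(K,\delta)$ explicitly in terms of $K$ and $\delta$, and then observe that since $K = K(g,p) = 2k(g,p)+46$ grows linearly in $k(g,p)$ while the standard bounds on $W$ are at worst polynomial (indeed, one may take $W$ to grow linearly in $K$ for fixed $\delta$), the slack built into the definition $K(g,p) = 2k(g,p)+46$ — namely the extra $k(g,p)+40$ beyond the quasigeodesic constant $k(g,p)+R+6$ — is more than enough to dominate $W$ once $\omega$, and hence $k(g,p)$, is large. First I would invoke a concrete form of the Morse/stability lemma in a $\delta$-hyperbolic space: a $\lambda$-quasigeodesic with $\lambda$ as in our definition of quasi-isometric embedding lies within Hausdorff distance $W$ of any geodesic with the same endpoints, where $W$ can be taken to be an explicit linear function $W = c_1 \delta \lambda + c_2$ for absolute constants $c_1, c_2$ (for instance one can extract such a bound from the standard proof via subdivision, or cite a textbook version such as Bridson–Haefliger III.H.1.7, tracking that the constant is linear in the quasigeodesic parameter). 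With $\delta = 17$ fixed and $\lambda = k(g,p)+R+6$, this gives $W \leq c_1 \cdot 17 \cdot (k(g,p)+R+6) + c_2$.

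Next I would compare this with the target $K(g,p)+R = 2k(g,p) + 46 + R$. Here the key point is that the relevant quasigeodesic constant $\lambda$ is really the \emph{multiplicative} constant, and in our setup (see the definition $d_X(x_1,x_2)\le k\,d_Y(f(x_1),f(x_2))+k$) the sequence $(v_{i_j})_j$ is a quasigeodesic whose behavior is governed by $k(g,p)+R+2B \le k(g,p)+R+6$, but the relevant comparison is against the \emph{$\mathcal{C}_1$-length} scale, not the index scale; I would make precise which parameter feeds into $W$ so that the inequality $W < K(g,p)+R$ reduces to something of the form $c_1 \cdot 17 \cdot (k(g,p)+R+6) + c_2 < 2k(g,p)+46+R$, i.e. to $(2-17c_1)k(g,p) > (17c_1-1)R + (c_2 + 17\cdot 6 c_1 - 46)$. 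This inequality holds for all $R\ge 0$ and all sufficiently large $k(g,p)$ \emph{provided} the coefficient of $k(g,p)$ on the left is positive and the coefficient of $R$ on the right is controlled — which is exactly the subtlety: a naive linear bound on $W$ with $c_1 \geq 2/17$ would fail.

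The main obstacle, therefore, is getting a good enough constant in the stability lemma — specifically ensuring the dependence of $W$ on the quasigeodesic parameter is linear with a small enough leading coefficient, and crucially that the dependence on $R$ does not outpace the $+R$ we are allowed on the right-hand side. To handle this cleanly I would instead argue as follows: treat the sequence $(v_{i_j})_j$ as an \emph{unparameterized} quasigeodesic with steps of size at most $k(g,p)+R+2B$, apply the version of the Morse lemma for unparameterized quasigeodesics with bounded jumps (whose stability constant depends on $\delta$ and the jump size but \emph{linearly}, and with an absolute leading constant independent of $R$ once the jump size is absorbed correctly), and then note that any geodesic between two sequence vertices is within that stability distance of the sequence. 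Since $k(g,p) = O(\omega^2)\to\infty$ as $\omega\to\infty$ by Lemma $5.1$, for $\omega$ sufficiently large the quadratic growth of the left-hand slack $k(g,p)+40$ dominates the at-most-linear-in-$(k(g,p)+R)$ bound on $W$, giving $W < K(g,p)+R$ and completing the proof.
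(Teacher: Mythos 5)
Your proposal does not close, and the gap is exactly at the point you yourself flag as ``the main obstacle.'' The inequality to be proved is $W(k(g,p)+R+6,\,17) < 2k(g,p)+R+46$ for every $R\geq 0$ and all large $\omega$. Since $R$ appears with coefficient $1$ on the right and inside the quasigeodesic constant on the left, any bound of the form $W \leq c_1\lambda + c_2$ with $\lambda = k(g,p)+R+6$ is useless unless $c_1 \leq 1$ (and no amount of ``$\omega$ sufficiently large'' repairs a leading coefficient that is too big, since both sides scale linearly with $k(g,p)+R$). You correctly observe that the textbook Morse lemma does not obviously supply such a $c_1$, but you then resolve the difficulty by fiat: you invoke ``the version of the Morse lemma for unparameterized quasigeodesics with bounded jumps'' whose stability constant is claimed to be linear ``with an absolute leading constant independent of $R$ once the jump size is absorbed correctly.'' No such statement is proved, made precise, or cited; the off-the-shelf quantitative versions (e.g.\ Bridson--Haefliger III.H.1.7, or the sharp bounds, which are \emph{quadratic} in the multiplicative constant) do not deliver a coefficient $\leq 1$. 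Your closing sentence is also confused: the slack $k(g,p)+40$ grows \emph{linearly} in $k(g,p)$ (it is $\omega$, not $k$, in which it is quadratic), so ``quadratic growth of the left-hand slack'' cannot dominate a linear-in-$(k+R)$ bound on $W$ unless the leading coefficient is already small enough --- which is the unproven claim.

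The paper supplies precisely this missing ingredient by a divergence-function argument (following Ohshika) rather than by quoting a stability constant. Let $D$ be the maximal distance from the geodesic $h$ to the quasigeodesic $w$, realized at $s\in h$. One constructs an explicit path joining two points outside the ball of radius $D$ about $s$ whose length is bounded above by a polynomial in $D$ and $K=k(g,p)+R+6$ (roughly $4D+6KD$, using the quasigeodesic inequality to bound the length of the portion of $w$ traversed). Hyperbolicity gives an \emph{exponential} lower bound $f(D-c)$ on the length of any such path. Comparing: if $D> K$ the upper bound is at most a fixed polynomial in $D$ alone, so the exponential lower bound forces $D\leq D_0$ for an absolute constant $D_0=D_0(\delta)$. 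Hence $W\leq \max(K, D_0)$, a linear bound with leading coefficient $1$, which for $\omega$ large (so that $k(g,p)\geq D_0$) is indeed less than $2k(g,p)+R+46$. If you want to salvage your write-up, you should replace the appeal to an unspecified Morse lemma with this exponential-versus-polynomial comparison, or with some other argument that genuinely produces a coefficient at most $1$ in front of $R$ and at most $2$ in front of $k(g,p)$.
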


\begin{proof} We only give a sketch here; the main idea of the proof follows an argument on page $35$ of Ohshika \cite{Ohs}, and we refer to this for a more complete argument. Hyperbolicity of $\mathcal{C}_{1}$ implies the existence of an exponential divergence function; that is, if $\alpha_{1},\alpha_{2}:[0,\infty) \rightarrow \mathcal{C}_{1}$ are two geodesic rays based at the same point $x_{0}\in \mathcal{C}_{1}$, then there is some exponential function $f$ so that for suficiently large $r$ (depending on the choice of geodesic rays), the length of any arc outside of a ball of radius $r$ centered at $x$, connecting $\alpha_{1}(r)$ and $\alpha_{2}(r)$ is at least $f(r)$. 

Let $x,y$ be two elements of a vertex cycle sequence $(v_{i_{j}})_{j}$, and let $h$ be a geodesic segment connecting them. Denote by $w$ the $(k(g,p)+M+6)$-quasigeodesic segment obtained by following along the vertex sequence from $x$ to $y$. 

Let $D=\sup_{x\in h}d_{\mathcal{C}}(x,w)$, and suppose $s\in h$ with $d_{\mathcal{C}}(s,w)=D$. Let $a$ and $b$ be two points on $w$ whose distance from $s$ is $D$ and such that $a$ and $b$ are on different sides of $s$. Note that we can assume that such points exist, because the end points of $w$ are also the endpoints of $h$, and therefore $s$ must be at least $D$ from the end points of $w$. 

Let $a'$ (resp $b'$) be points located $2D$ from $s$ on either side of $s$ on $w$; if $s$ is closer than $2D$ to one of the endpoints of $w$, simply define $a'$ (resp. $b'$) to be this corresponding endpoint of $w$. Let $y,z\in h$ be points whose distances are less than $D$ from $a',b'$ respectively. Note that there is an arc $\sigma$ joining $y$ to $z$, by first connecting $y$ to $a'$, then $a'$ to $b'$ along $w$, and then jumping back over to $h$. Thus

\[ d_{\mathcal{C}}(y,z)\leq d_{\mathcal{C}}(y,a')+d_{\mathcal{C}}(a',b')+d_{\mathcal{C}}(b',z)\]
\[ \leq D+4D+D= 6D. \]

This gives a bound on the length of the segment of $w$ connecting $y$ and $z$ since it is a quasi-geodesic:

\[ \mbox{length}_{w}(y,z)\leq (k(g,p)+R+6)\cdot 6D. \]

Let $\beta$ be the arc obtained by concatenating the following $5$ arcs: the arc along $h$ from $a$ to $a'$, the arc connecting $a'$ to $y$, the arc along $w$ from $y$ to $z$, the arc connecting $z$ to $b'$, and the arc along $h$ from $b'$ to $b$ (see Figure $5$). 

It follows that 
\[ \mbox{length}(\beta)\leq 4D+ (k(g,p)+R+6)D.\]

Now we use the divergence function $f$ for $\mathcal{C}_{1}$ to bound the length of $\beta$ from below. Indeed, for sufficiently large $D$, we have 

\[ \mbox{length}(\beta)\geq f(D- c),\]

where $c$ is a constant related related to $f(0)$, and which does not affect the growth rate of the function $f$. Therefore, 

\[ f(D-c)\leq 4D+(k(g,p)+R+6)D.\]

\begin{figure}
\centering
	\includegraphics[width=3.5in]{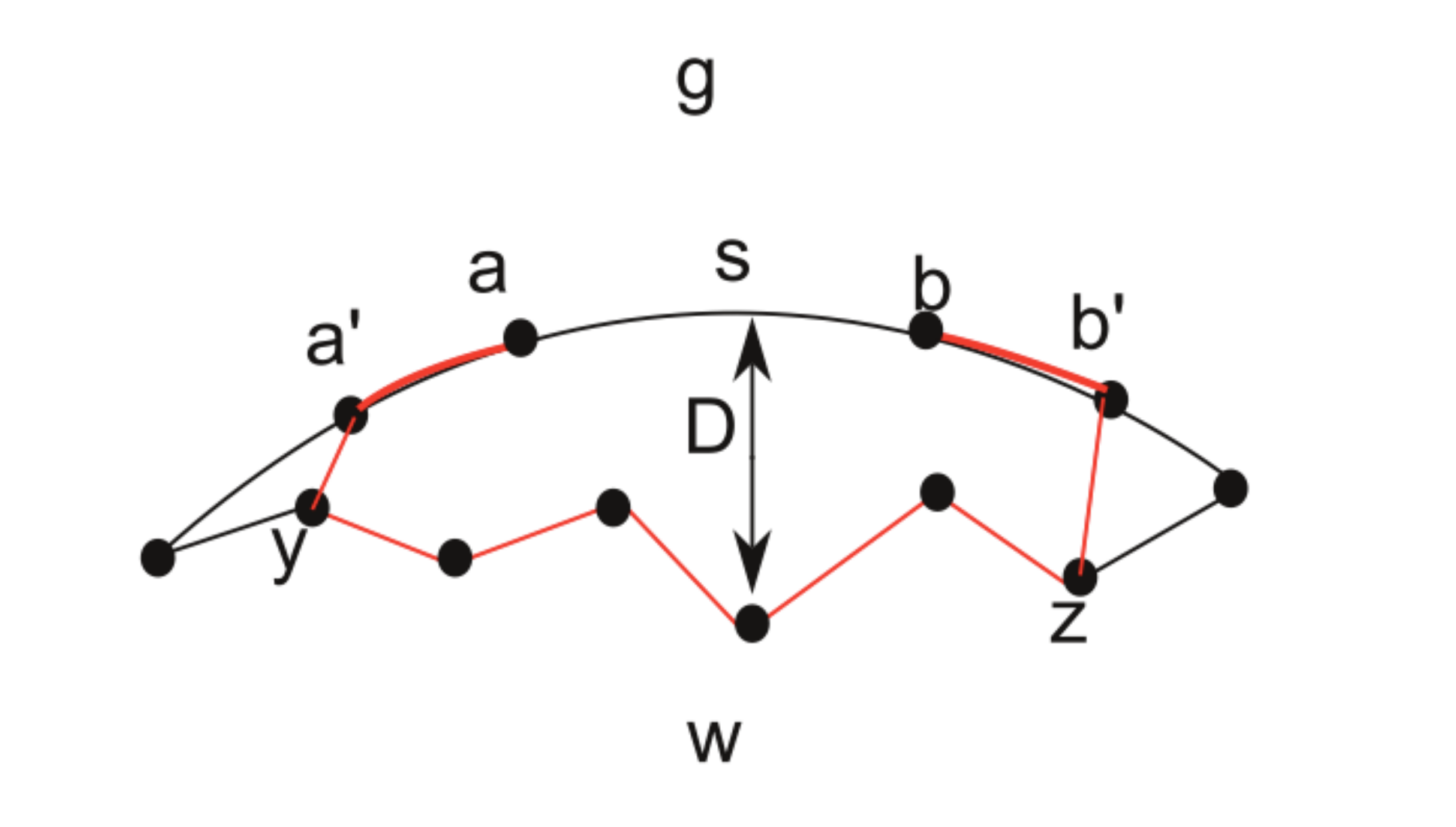}
\caption{ The length of the path $\beta$ outlined in red is bounded above by $4D+(k(g,p)+R+6)D$. }
\end{figure}

Therefore, if $D> k(g,p)+R+6$, $\omega$ can not be arbitrarily large because $f(x)$ eventually dominates $x^{2}$. 
\end{proof}

\begin{remark} We note that the conclusion of Lemma $6.1$ is not at all sharp; indeed, the same argument would have shown that $W$ is eventually smaller than $(k(g,p)+R+6)^{\lambda}$ for any $\lambda\in (0,1)$. However we do not concern ourselves with this, because the contribution to the quasiconvexity of nested sequences coming from $W$ will be dominated by a larger term, as will be seen below. 

\end{remark}

We have now shown that the collection of vertices of the sequence $(\tau_{i_{j}})_{j}$ is quasiconvex with quasi-convexity constant $k(g,p)+R+6$. It remains to analyze the vertex cycles of tracks that are not in this subsequence. If $v$ is such a vertex and $\omega$ is sufficiently large, we know that $v$ is within $k(g,p)+6$ from some vertex of one of the $\tau_{i_{j}}$'s. In any $\delta$-hyperbolic space, geodesics with nearby end points \textit{fellow travel}, in that they remain within a bounded neighborhood of one another, whose diameter depends only on $\delta$ and the distance between endpoints. 

Indeed, if $h$ is any geodesic segment connecting arbitrary vertices $v_{1},v_{2}$, $h$ must remain within $2\delta+k(g,p)+6 \leq 40+k(g,p)$ of some geodesic connecting vertices of the $\tau_{i_{j}}$. 

Therefore, the collection of all vertices of the sequence $(\tau_{i})_{i\in I_{1}}$ is a $(46+R+2k(g,p))$-quasiconvex subset of $\mathcal{C}_{1}$.  $\Box$

\subsection{Proof of Corollary $1.3$}

Masur and Minsky complete their argument showing the quasiconvexity of $D(g)\subset \mathcal{C}_{1}(S_{g})$ by noting that any two disks in $D(g)$ can be connected by a path in $D(g)$ representing a \textit{well-nested curve replacement sequence}, a certain kind of nested train track sequence with $R$-bounded steps for which one can take $R$ to be $15$. 

Thus we see that $D(g)$ is $(61+4k(g,0))$-quasiconvex, and this completes the proof of Corollay $1.3$.   $\Box$

\subsection{Proof of Theorem $1.2$}
The purpose of this subsection is to prove Theorem $1.2$, which states that the splitting and sliding sequences project to $O(\omega^{2})$-unparameterized quasi-geodesics in the curve graph of any essential subsurface $Y\subseteq S$. To do this, we simply follow the original argument of Masur-Mosher-Schleimer \cite{Mas-Mos-Sch}, effectivizing along the way. 

We first introduce some terminology; given a subsurface $Y$, as in section $2$, let $S^{Y}$ denote the (non-compact) covering space of $S$ corresponding to $Y$. Then if $\tau$ is a train track on $S$, let $\tau^{Y}$ denote the pre-image under the covering projection of $\tau$ to $S^{Y}$. Then let $\mathcal{C}(\tau^{Y})$ and $\mathcal{AC}(\tau^{Y})$ denote the collection of essential, non-peripheral, simple closed curves (respectively curves and arcs) in the Gromov compactification of $S^{Y}$ whose interiors are train paths on $\tau^{Y}$. Let $V(\tau)$ denote the collection of vertex cycles of a track $\tau$. 

Then if $Y$ is not an annulus, define the \textit{induced track}, denoted $\tau|Y$, to be the union of branches of $\tau^{Y}$ traversed by some element of $\mathcal{C}(\tau^{Y})$. 

 We first note that any splitting and sliding sequence $(\tau_{i})_{i}$ is a nested train track sequence with $Z$-bounded steps, for $Z$ some uniform constant. Indeed, if $\tau_{i}$ is obtained from $\tau_{i-1}$ by either a splitting or a sliding, any vertex cycle of $\tau_{i}$ may intersect a vetex cycle of $\tau_{i-1}$ at most $6$ times over any branch of $\tau_{i-1}$. Thus there is some linear function $f: \mathbb{N} \rightarrow \mathbb{N}$ such that $i(v_{i},v_{i-1})<f(\omega(g,p))$ for $(\tau_{i})_{i}$ a sliding and splitting sequence on $S_{g,p}$, and $v_{i}$ (resp. $v_{i-1}$) is any vertex cycle of $\tau_{i}$ (resp. $\tau_{i-1}$), and therefore as a consequence of Theorem $3.2$, for sufficiently large $\omega$,
\[ d_{\mathcal{C}}(v_{i},v_{i-1}) < 4. \] 
 To show that $(\psi_{Y}(\tau_{i}))_{i}$ is a $O(\omega^{2})$-unparameterized quasi-geodesic in $\mathcal{C}(Y)$, we will exhibit a splitting and sliding sequence $(\sigma_{i})_{i}$ on $Y$ such that $d_{\mathcal{C}}(\tau_{i},\sigma_{i})= O(1)$. Then we'll be done by applying Theorem $1.1$ to the sequence $(\sigma_{i})$.

 Given a vertex cycle $\alpha$ of $\tau_{j}|Y$, define $\sigma_{j}\subset \tau_{j}|Y$ to be the minimal track carrying $\alpha$; thus $\sigma_{j}$ is recurrent by construction, and Masur, Mosher and Schleimer show $\sigma_{j}$ to be transversely recurrent as well. 

Furthermore, they show that $\sigma_{j+1}$ is obtained from $\sigma_{j}$ by a slide or a split, so long as $\sigma_{j}\neq \sigma_{j+1}$. Therefore $(\sigma_{i})_{i}$ constitutes a sliding and splitting sequence of birecurrent train tracks, and thus is a nested train track sequence on $Y$ with $Z$- bounded steps.

Since $\sigma_{j}$ is a subtrack of $\tau_{j}|Y$, by Lemma $5.2$, any vertex cycle of $\sigma_{j}$ is a vertex cycle of $\tau_{j}|Y$, and therefore the diameter of $V(\tau_{j}|Y)\cup V(\sigma_{j})$ is no more than $6$ for sufficiently large $\omega$. 

Since $\alpha$ is carried by $\tau_{j}|Y$, it is also carried by $\tau_{j}$. Masur, Mosher, and Schleimer then make use of a lemma which implies the existence of a vertex cycle $\beta_{j}$ of $\tau_{j}$ which intersects the subsurface $Y$ essentially. By Lemmas $2.8$ and $5.4$ of \cite{Mas-Mos-Sch}, 
\[ i(\pi_{Y}(\beta_{j}), v_{j}) < 8|\mathcal{B}(\tau_{j})|, \] 
and therefore by Lemma $4.7$ and Theorem $3.2$, for $\omega$ sufficiently large,
\[ d_{\mathcal{C}}(\pi_{Y}(\beta_{j}),v_{j} )< 4 .\]
 This same argument applies to any vertex cycle of $\tau_{j}$ which projects non-trivially to $Y$, and thus we conclude that 
\[ d_{Y}(\sigma_{j}, \tau_{j}) \leq d_{Y}(\sigma_{j}, \tau_{j}|Y)+ d_{Y}(\tau_{j}|Y, \tau_{j})\]
\[ < 6 + 4 =10, \]
for all $\omega$ sufficiently large. $\Box$


\begin{thebibliography}{99}

\bibitem{Ab-Sc}
A. ~Abrams, S. ~Schleimer. \emph{Distances of Heegaard Splittings}. \textit{Geometry and Topology} \textbf{9} (2005), 95-119. 

\bibitem{Aoug} 
T. ~Aougab. \emph{Uniform Hyperbolicity of the Graphs of Curves.}. http://arxiv.org/abs/1212.3160


\bibitem{BowII}
B.~Bowditch. \emph{Uniform Hyperbolicity of the Curve Graphs}. http://homepages.warwick.ac.uk/~masgak/papers/uniformhyp.pdf


\bibitem{Cl-Ra-Sc}
M.T. ~Clay, K.~ Rafi, S. ~ Schleimer. \emph{Uniform Hyperbolicity of the curve graph via surgery sequences}. arxiv:Math.GT/1302.5519


\bibitem{Far-Mar}
B.~Farb, D. ~Margalit. \emph{A Primer on Mapping Class Groups}, volume $49$ of $\textit{Princeton Mathematical Series}$. Princeton University Press, Princeton, NJ, 2012. ISBN 9780691147949.

\bibitem{Fio-Jor-The-Woo}
S.~ Fiorini, G.~Joret, D.~ Oliver Theis, D.~Wood. \emph{Small Minors in Dense Graphs}, Arxiv preprint: 1005.0895v4, 2012.



\bibitem{Hemp}  
J. ~Hemp. \emph{3-Manifolds as viewed from the Curve Complex.} \textit{Topology}, 40 (2001): 319-334. 

\bibitem{Ham}
U.~Hamenst\"{a}dt. \emph{Geometry of the Complex of Curves and of Teichm\"{u}ller Space}, in \textit{Handbook of Teichm\"{u}ller Theory}, Volume 1,
A. Papadopoulos, ed., European Math. Soc. 2007, 447-467.

\bibitem{Hen-Prz-We}
S. ~Hensel, P.~Przytycky, R.~ Webb. \emph{Slim Unicorns and Uniform Hyperbolicity for Arc Graphs and Curve Graphs.}  http://arxiv.org/abs/1301.5577


\bibitem{Ker}
S. ~Kerckhoff. \emph{The measure of the limit set of the handelbody group}. \textit{Topology} \textbf{29} (1990), no. 1, 27-40. 


\bibitem{Mas-Mur}
H.~Masur, Y. Minsky. \emph{Geometry of the Complex of Curves I: Hyperbolicity}. \textit{Invent. Math.} \textbf{138} (1999), 103-149

\bibitem{Mas-MurII}
H.~Masur, Y.~ Minsky. \emph{Geometry of the Complex of Curves II: Hierarchical Structure}. \textit{Geom. Funct. Anal.} \textbf{10} (2000), 902-974.

\bibitem{Mas-MurIII}
H.~ Masur, Y.~ Minsky. \emph{Quasiconvexity in the Curve Complex}. \textit{In the Tradition of Ahlfors and Bers, III}. (W. Abikoff and A. Haas, eds.), Contemporary Mathematics \textbf{355}, Amer. Math. Soc. (2004), 309-320. 

\bibitem{Mas-Mos-Sch}
H. ~Masur, L. ~Mosher, S. ~Schleimer. \emph{On train track splitting sequences}. \textit{Duke Mathematical Journal}, 161 (2012), no. 9, 1613-1656. 

\bibitem{Mas-Sch}
H. ~Masur. S. ~Schleimer. \emph{The Geometry of the Disk Complex}. \textit{Journal of the American Mathematical Society}, \textbf{26} (2013), no. 1, 1-62. 

\bibitem{Mos} 
L. ~Mosher. \emph{Train track expansions of measured foliations}. Version $12/29/2003$. $andromeda.rutgers.edu/~mosher/arationality_03_12_28.pdf. $


\bibitem{Ohs}
K. ~Ohshika. \emph{Discrete Groups.} Japan Association for Mathematical Sciences. Iwanami Shoten, Tokyo, 1998. English version translation by K. Ohshika, published by American Mathematical Society (2000). 

\bibitem{Pen-Har}
R.~Penner, J.~Harer. \emph{Combinatorics of train tracks} Annals of Math. Studies no. 125, Princeton University Press, 1992. ISBN 9780691025315. 

\bibitem{Thur}
W. ~Thurston.  \emph{On the geometry and dynamics of diffeomorphisms of surfaces}. Bull. Amer. Math. Soc. \textbf{19} 2 (1988) 417-431. 



\end{thebibliography}
\end{document}